\DeclareMathOperator*{\argmax}{arg\,max}
\newcommand{\retainlabel}[1]{\label{#1}\sbox0{\ref{#1}}}
\definecolor{refkey}{named}{blue}
\definecolor{labelkey}{named}{blue}
\newcommand{\proofstep}[1]{%
  \par
  \addvspace{\smallskipamount}
  \textit{#1\@addpunct{.}}\enspace\ignorespaces
}%
\newcommand{\overstar}[1]{\overset{\lower.5em\hbox{\scriptsize$\star$}}{#1}}
\newcommand{\sms}[1]{\scriptscriptstyle\text{#1}}
\newcommand{\sss}{\scriptscriptstyle}
\newcommand{\bsb}[1]{\boldsymbol{#1}}
\newcommand{\pd}{\partial}
\newcommand{\td }{\text{d}}
\newcommand{\intG}[1]{\big\langle {#1} \big\rangle_{\sss\Gamma}} 
\newcommand{\intO}[1]{\big\langle {#1} \big\rangle_{\sss\Omega}} 
\newcommand{\tranv}[1]{\left({#1}\right)\tv } 
\newcommand{\tv}{^{\sms{T}}} 
\newcommand{\divs}{\text{div}_{\sms{S}}}
\title{Shape optimization of slip-driven axisymmetric microswimmers
\thanks{Draft version, \today.
\funding{We acknowledge support from NSF under
grants DMS-1454010 and DMS-2012424. }}}
\author{
Ruowen Liu\thanks{Department of Mathematics, Rider University, Lawrenceville, NJ, USA
  (\email{rliu@rider.edu})}
\and Hai Zhu\thanks{Flatiron Institute, Simons Foundation, New York, NY, USA(\email{hzhu@flatironinstitute.org})}
\and Hanliang Guo\thanks{Departments of Mathematics \& Computer Science, Ohio Wesleyan University, Delaware, OH, USA (\email{hguo@owu.edu})}
\and Marc Bonnet\thanks{POEMS (CNRS, INRIA, ENSTA), ENSTA Paris, 91120 Palaiseau, France
  (\email{mbonnet@ensta.fr})}
\and Shravan Veerapaneni\thanks{Department of Mathematics, University of Michigan, Ann Arbor, MI, USA (\email{shravan@umich.edu})}
}
\begin{document}

\maketitle

\begin{abstract}
In this work, we develop a computational framework that aims at simultaneously optimizing the shape and the slip velocity of an axisymmetric microswimmer suspended in a viscous fluid. We consider shapes of a given reduced volume that maximize the swimming efficiency, i.e., the (size-independent) ratio of the power loss arising from towing the rigid body of the same shape and size at the same translation velocity to the actual power loss incurred by swimming via the slip velocity.
The optimal slip and efficiency (with shape fixed) are here given in terms of two Stokes flow solutions, and we then establish shape sensitivity formulas of adjoint-solution that provide objective function derivatives with respect to any set of shape parameters on the sole basis of the above two flow solutions. Our computational treatment relies on a fast and accurate boundary integral solver for solving all Stokes flow problems. We validate our analytic shape derivative formulas via comparisons against finite-difference gradient evaluations, and present several shape optimization examples.
\end{abstract}
\begin{keywords}
Low-Re locomotion, shape sensitivity analysis, integral equations, fast algorithms
\end{keywords}
\begin{AMS}
49M41, 76D07, 65N38
\end{AMS}

\section{Introduction}

Studying the efficiency of biological microswimmers is pivotal to understanding
natural systems and designing artificial ones for accomplishing various physical tasks~\cite{elgeti2015physics}.
Both the body shape and the locomotory gait contribute to the swimming efficiency of the microswimmers.
However, since the inertial effects are negligible at the microscale, optimal swimming strategies markedly diverge from those observed at larger scales (e.g., swimming of fish)~\cite{lauga2009hydrodynamics}. 
Additionally, many microswimmers are covered by densely packed cilia, which are active microtubule-based structures much shorter than the microswimmer's body size~\cite{brennen1977fluid}. 
The periodic beatings of cilia turn the cell surface into an `active slip surface' without much change to the body shape~(see, e.g., \cite{blake1971spherical} and \cite{pedley2016spherical}). As a result, naively finding the swimmer shape to minimize the fluid drag could be a sub-optimal strategy. On the other hand, artificial microswimmers such as phoretic particles locomote by the effective slip velocities on the particle surfaces resulted from the asymmetry of chemical reactions on their surfaces \cite{anderson1989colloid, golestanian2007designing, moran2017phoretic}. Artificial microswimmers have attracted much attention owing to their importance in applications such as targeted drug delivery, microsurgery, and automated transport of cargo/payloads in microfluidic chips \cite{li2017micro}. Consequently, shape optimization for the slip-driven microswimmers can shed light on the shapes and swimming mechanisms of biological microswimmers, and provide guidance for the design and engineering of artificial ones.

%

In an earlier work, \cite{leshansky2007frictionless} studied the optimal slip velocity of spheroids, using analytical solutions of the Stokes equations in spheroidal coordinates. 
In contrast to the drag minimization problem where optimal shapes provide marginal efficiency gains over spheroids \cite{pironneau1973optimum, bourot1974numerical, montenegro2015other}, they found that the swimming efficiency grows unbounded with the aspect ratio.
Shortly after, \cite{vilfan2012optimal} optimized the shape and slip velocity for swimming efficiency at the same time, subject to a minimum curvature constraint. Motivated by the cilia carpet that formed the slip surface, the author considered the energy dissipation {\em inside} the cilia carpet, and assumed a local linear relationship between slip velocity and force density, 
which simplified the question significantly to a quadratic problem. 
The optimal shapes with different minimum curvatures evolve from a sphere to a prolate shape with ripples on the surface, and eventually to American football shape with long protrusions from both ends as the allowed minimum curvature decreases from one to almost zero. The activation (slip velocity) of the protruded shape appears to be heavily localized near the ends of the protrusions.

In a recent study~\cite{guo2021slip}, we introduced a numerical algorithm for determining, for a given arbitrary axisymmetric shape, the slip velocity that minimizes the power loss {\em outside} the slip surface while maintaining a given swim speed. By exploiting the quadratic dependence in the slip velocity of the power loss functional, the numerical solution of the resulting optimization problem could be be performed efficiently with the help of a fast boundary integral solver for the forward problem, typically taking only a few seconds on a standard laptop for a given shape~\cite{guo2021slip}. We explored a wide range of shapes with different reduced volume (volume normalized by surface area), and found prolate spheroids to be the most efficient among the tested shapes.

In this work, we develop a computational framework that aims at simultaneously optimizing the shape and the slip velocity while keeping the volume and the surface area constant. While our main focus is on the shape optimization component, we also provide an improved version of our earlier slip optimization method~\cite{guo2021slip} whereby the optimal slip velocity and swim efficiency are obtained for a given fixed shape in terms of two flow solutions. The main contributions of this work are two-fold. First, we establish shape sensitivity formulas tailored to the specific characteristics of the problem at hand. Their derivation exploits the fact that the swimming efficiency is given for any given shape by a Rayleigh quotient of quadratic forms and uses the weak formulation of the flow problems together with reciprocity identities. The resulting shape sensitivities are expressed as integrals on the swimmer boundary involving the two solutions that determine the optimal slip, and in a form consistent with the general structure of shape derivative formulas~\cite{henrot:pierre:18}. They conform to the widely-used adjoint solution approach~\cite{hinze:08}, as they allow to evaluate shape functional derivatives with respect to any chosen set of shape parameters on the basis of only the two aforementioned flow solutions. Second, as in \cite{bonnet2020shape,bonnet2023shape}, we employ boundary integral equation (BIE) techniques to solve the flow problems for any given shape in a straightforward manner. For shape optimization problems, BIEs have the significant advantage of avoiding any volume re-meshing between optimization iterations, on top of other usual advantages over classical domain discretization methods. Moreover, the improved version of the slip optimization component given in this work constitutes an additional contribution, whose role is important for the combined optimization problem at hand since the same two flow solutions provide the optimal slip as well as all shape sensitivities on any given shape. While the combination of adjoint-based methods and BIE methods have been successfully applied to shape optimization problems for Stokes flow previously (e.g., \cite{zabarankin2010three, alouges2011numerical, walker2013analysis, bonnet2020shape, bonnet2023shape}), we are not aware of any work that applied these methods to slip-driven microswimmers.

The paper is organized as follows. We introduce the underlying forward problem in Section~\ref{sc:formulation}, then formulate the optimization problem and derive the sensitivity formulas in Section~\ref{sc:optimization}. The proof of the main shape sensitivity results is then given in Section~\ref{prop:Jw:proof}. We next propose the numerical scheme in Section~\ref{sc:NumMeth} and provide some numerical examples in Section~\ref{sc:results}. Section~\ref{sc:conclusions} closes the paper with concluding remarks.\enlargethispage*{1ex}

\section{Forward problem formulation} \label{sc:formulation}

\subsection{Geometry and notation}
\label{geometry}

Let the axisymmetric body of the microswimmer occupy the bounded domain $\Omega_{\sms{S}}$ with (closed smooth) boundary $\pd \Omega_{\sms{S}}=\Gamma$, and let $\Omega=\mathbb{R}^3\!\setminus\!\overline{\Omega_{\sms{S}}}$ denote the unbounded fluid region surrounding it. The surface $\Gamma$ is generated by rotating about $\bsb{e}_z$ an open arc $\gamma$ given in the $(\bsb{e}_x, \bsb{e}_z)$-plane by
\begin{equation}
  \gamma\ni\bsb{x}_{\gamma}(t) = R(t)\bsb{e}_x + Z(t)\bsb{e}_z,  \quad 0\le t \le \pi, \label{curve:shape}
\end{equation}
where the parametric interval $t\in[0,\pi]$ is used for consistency with the implementation (see Fig.~\ref{geom:2D}) and $R, Z$ are smooth ($C^2$) functions satisfying
\begin{equation}
  \text{(a) \ }R(0)=R(\pi)=0, \qquad \text{(b) \ }R(t) \geq 0, \qquad
  \text{(c) \ }Z'(0) = Z'(\pi) = 0. \label{eq:rz}
\end{equation}
(the prime symbol $()'$ indicating derivatives of univariate functions). The last condition above ensures that $\Gamma$ is smooth at the poles. The surface $\Gamma$ then has the parametric representation
\begin{equation}
  \Gamma\ni\bsb{x}(t,\phi)
 = R(t)\bsb{e}_r(\phi) + Z(t)\bsb{e}_z, \quad 0\le t\le\pi,\quad 0\le\phi < 2\pi , \label{wall:shape}
\end{equation}
where $\phi$ denotes the angular polar coordinate in the $(\bsb{e}_x,\bsb{e}_y)$-plane and $\bsb{e}_r(\phi)=\bsb{e}_x\cos\phi+\bsb{e}_y\sin\phi$.  The unit tangent vector $\bsb{\tau}(t,\phi)$ in the meridian $(\bsb{e}_r,\bsb{e}_z)$-plane and the unit normal vector $\bsb{n}(t,\phi)$ pointing inwards of the body are given by
\begin{equation}
  \alpha(t)\bsb{\tau}(t,\phi)  = R'(t) \bsb{e}_r(\phi) + Z'(t)\bsb{e}_z, \quad
  \alpha(t)\bsb{n}(t,\phi) = Z'(t)\bsb{e}_r(\phi) - R'(t)\bsb{e}_z, \label{tn:axi}
\end{equation}
where $\alpha(t) := \sqrt{R'{}^2(t)+Z'{}^2(t)}$
is the arc-length Jacobian. Any axisymmetric vector field $\bsb{v}$ on $\Gamma$ has the form
\begin{equation}
  \bsb{v}(t,\phi) = v_{\tau}(t)\bsb{\tau}(t,\phi) + v_n(t)\bsb{n}(t,\phi) \qquad\text{with}\quad
  v_{\tau}:=\bsb{v}\cdot\bsb{\tau},\ v_n:=\bsb{v}\cdot\bsb{n}, \qquad 0\le t \le \pi, \, \quad 0\le\phi < 2\pi \label{tn:components}
\end{equation}
which will often be used in the sequel. Then, by the Frenet formulas on $\gamma_{\phi}$,
\begin{equation}
  \pd_t\bsb{x}(t,\phi) = \alpha(t)\bsb{\tau}(t,\phi), \qquad
  \pd_t\bsb{\tau}(t,\phi) = \alpha(t)\kappa(t)\bsb{n}(t,\phi), \qquad
  \pd_t\bsb{n}(t,\phi) = -\alpha(t)\kappa(t)\bsb{\tau}(t,\phi),
  \label{eq:frenet}
\end{equation}
where $\kappa=\kappa(t)$ is the curvature, given by
\begin{equation}
  \alpha^3\kappa = \alpha^2\bsb{n}\cdot\pd_t\bsb{\tau} = - \alpha^2\bsb{\tau}\cdot\pd_t\bsb{n} = Z'R''-R'Z''. \label{kappa:expr}
\end{equation}
The differential area element on $\Gamma$ is $\td S=R(t)\alpha(t) \,\td t \,\td \phi$. For any axisymmetric function $f$ defined on $\Gamma$,
\begin{equation}
  \int_{\Gamma} f\td S
  = \int_0^{2\pi}\int_0^{\pi} f(t)R(t)\alpha(t) \,\td t \,\td \phi = 2\pi\int_{0}^{\pi} f(t)\,R(t)\alpha(t) \,\td t. \label{iG:1D}
\end{equation}

\begin{figure}[t] \centering
  \includegraphics[width=0.575\textwidth]{./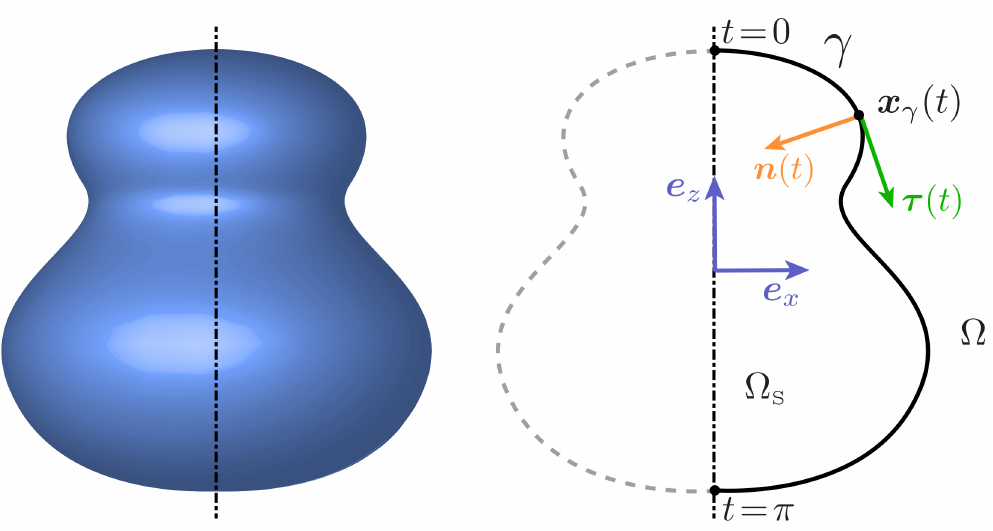}
  \caption{{\em Axisymmetric body of the microswimmer: geometry and notation.}}\label{geom:2D}
\end{figure}

\subsection{PDE of forward problem} Axisymmetric slip velocities are of the form
\begin{equation}
  \bsb{u}^{\sms{S}}(t,\phi)=u^{\sms{S}}(t)\bsb{\tau}(t,\phi) \label{eq:bsbus}
\end{equation}
at any point $\bsb{x}(t,\phi)\in\Gamma$, where $u^{\sms{S}}(t)$ is the slip velocity profile and the unit tangent $\bsb{\tau}$ is defined by~\eqref{tn:axi}. The axisymmetry assumption also implies that the profile $u^{\sms{S}}(t)$ must satisfy
\begin{equation}
u^{\sms{S}}(0) = u^{\sms{S}}(\pi) = 0.  \label{eq:us0}
\end{equation}
to prevent singularities in the flow problem. In the viscous dominant regime, the velocity field $\bsb{u}$ and the pressure field $p$ in the fluid region verify the Stokes PDE system
\begin{equation}
-\mu \nabla^2 \bsb{u} + \nabla p = \bsb{0}, \quad 
\nabla \cdot\bsb{u} = 0, \quad \forall\bsb{x}\in\Omega,  \label{eq:forward}
\end{equation}
where $\mu$ is the dynamic viscosity. In addition, the velocity is prescribed on $\Gamma$ as
\begin{equation}
  \bsb{u} = \bsb{u}^{\sms{D}} := U\bsb{e}_z + u^{\sms{S}}\bsb{\tau} \qquad \text{on }\Gamma, \label{eq:uD}
\end{equation} 
where the axial (along the $z$-axis) translation velocity $U$ of the body is determined by requiring that the force density $\bsb{f}=-p\bsb{n}+\mu(\nabla \bsb{u} + \nabla \tv \bsb{u})\cdot\bsb{n} =( -p\bsb{I}+2\mu D[\bsb{u}])\cdot\bsb{n}$ on $\Gamma$ induced by the flow produces a zero axial net force, i.e.
\begin{equation}
  0 = \intG{\bsb{f}, \bsb{e}_z} = \int_{\Gamma}  (\bsb{f} \cdot \bsb{e}_z) \,\td S
  = 2\pi\int_{0}^{\pi} (\bsb{f}(t) \cdot \bsb{e}_z)R(t)\alpha(t) \,\td t \label{eq:NNF}
\end{equation}
where $\intG{\cdot, \cdot}$ stands for the $L^2(\Gamma)$ duality product. Both the geometry and the slip velocity being (by assumption) axisymmetric, the flow is axisymmetric as well, which prevents rigid-body motions other than axial translations and implies automatic satisfaction of the no-net-torque and remaining no-net-force conditions.

We record the following sign and reciprocity properties:\enlargethispage*{1ex}
\begin{lemma}\label{lemma1}
Let $(\bsb{u}, p, \bsb{f})$ solve problem~\eqref{eq:forward}-\eqref{eq:uD}. Then:
\begin{equation}
  \intG{\bsb{u}^{\sms{D}}, \bsb{f}} = a(\bsb{u},\bsb{u}) \geq 0,
\end{equation}
where the positive bilinear form $a(\cdot,\cdot)$ is defined by~\eqref{eq:weakforward}. In addition, let $(\bsb{u}_1, p_1, \bsb{f}_1)$ define another solution of problem~\eqref{eq:forward}-\eqref{eq:uD}, with prescribed velocity $\bsb{u}_1^{\sms{D}}=U_1 \bsb{e}_z + u_1^{\sms{S}} \bsb{\tau}$. Then:
\begin{equation}
  \intG{\bsb{u}_1^{\sms{D}}, \bsb{f}}=\intG{\bsb{u}^{\sms{D}}, \bsb{f}_1}.
\end{equation}
\end{lemma}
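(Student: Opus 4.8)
The plan is to recognize both statements as instances of the first Green (Betti) identity for the Stokes operator, applied over the unbounded fluid region $\Omega$. Throughout I would write the Cauchy stress as $\bsb{\sigma}[\bsb{u},p]:=-p\bsb{I}+2\mu D[\bsb{u}]$, so that $\bsb{f}=\bsb{\sigma}[\bsb{u},p]\cdot\bsb{n}$ and, by the momentum balance in~\eqref{eq:forward}, $\nabla\cdot\bsb{\sigma}[\bsb{u},p]=-\nabla p+\mu\nabla^2\bsb{u}=\bsb{0}$. The form entering~\eqref{eq:weakforward} is the symmetric, positive strain-rate pairing $a(\bsb{u},\bsb{v})=2\mu\int_\Omega D[\bsb{u}]\!:\!D[\bsb{v}]\,\td V$, and the entire argument rests on just two pointwise facts: $\nabla\cdot\bsb{\sigma}=\bsb{0}$ (Stokes) and $\nabla\cdot\bsb{u}=0$ (incompressibility).

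For the first identity I would form the vector field with components $\sigma_{ij}u_i$ and apply the divergence theorem on $\Omega$, whose outward normal on $\Gamma$ coincides with the body-inward normal $\bsb{n}$ used to define $\bsb{f}$. Expanding the divergence gives $\partial_j(\sigma_{ij}u_i)=(\nabla\cdot\bsb{\sigma})_i\,u_i+\bsb{\sigma}\!:\!\nabla\bsb{u}=\bsb{\sigma}\!:\!\nabla\bsb{u}$; since $\bsb{\sigma}$ is symmetric this equals $\bsb{\sigma}\!:\!D[\bsb{u}]$, and the pressure contribution $-p\,(\nabla\cdot\bsb{u})$ drops out by incompressibility, leaving the integrand $2\mu\,|D[\bsb{u}]|^2\ge0$. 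The surface term over $\Gamma$ is exactly $\intG{\bsb{u},\bsb{f}}$, which equals $\intG{\bsb{u}^{\sms{D}},\bsb{f}}$ after inserting the trace $\bsb{u}=\bsb{u}^{\sms{D}}$ from~\eqref{eq:uD}. Hence $\intG{\bsb{u}^{\sms{D}},\bsb{f}}=a(\bsb{u},\bsb{u})\ge0$, as claimed.

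For the reciprocity identity I would run the same integration by parts but pair the stress of one flow against the velocity of the other. Testing $\bsb{\sigma}[\bsb{u},p]$ against $\bsb{u}_1$ yields $\intG{\bsb{u}_1,\bsb{f}}=2\mu\int_\Omega D[\bsb{u}]\!:\!D[\bsb{u}_1]\,\td V=a(\bsb{u},\bsb{u}_1)$, the pressure term vanishing because $\nabla\cdot\bsb{u}_1=0$. Repeating with the roles of the two flows exchanged gives $\intG{\bsb{u},\bsb{f}_1}=a(\bsb{u}_1,\bsb{u})$. The symmetry $a(\bsb{u},\bsb{u}_1)=a(\bsb{u}_1,\bsb{u})$ of the strain-rate pairing closes the loop, and substituting the traces $\bsb{u}=\bsb{u}^{\sms{D}}$, $\bsb{u}_1=\bsb{u}_1^{\sms{D}}$ on $\Gamma$ delivers $\intG{\bsb{u}_1^{\sms{D}},\bsb{f}}=\intG{\bsb{u}^{\sms{D}},\bsb{f}_1}$.

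The one step that genuinely needs care, rather than the formal algebra above, is the unboundedness of $\Omega$: the divergence theorem produces an extra surface integral over a large sphere $S_\rho$ of radius $\rho$, and I must argue it tends to zero as $\rho\to\infty$. I expect this to be the main obstacle, and I would dispatch it using the standard far-field decay of exterior Stokes solutions, $\bsb{u}=O(\rho^{-1})$ and $\bsb{\sigma}=O(\rho^{-2})$ (indeed faster here, since the no-net-force condition~\eqref{eq:NNF} removes the leading Stokeslet term): the integrand $\bsb{u}\cdot(\bsb{\sigma}\cdot\bsb{n})$ is then $O(\rho^{-3})$ against an area growing like $\rho^{2}$, so its contribution is $O(\rho^{-1})\to0$. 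The same decay makes $D[\bsb{u}]$ square-integrable on $\Omega$, so that $a(\bsb{u},\bsb{u})$ is finite; this is precisely what legitimizes passing from the formal identity to the stated result.
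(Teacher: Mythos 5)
Your proof is correct, and it proves exactly the two identities of the lemma via the classical Green/Lorentz reciprocity argument: integrate $\nabla\cdot\big(\bsb{\sigma}[\bsb{u},p]\cdot\bsb{u}_1\big)$ over $\Omega$, use $\nabla\cdot\bsb{\sigma}=\bsb{0}$, the symmetry of $\bsb{\sigma}$, and incompressibility, then control the far-field contribution. The paper reaches the same identities with a different packaging: it never returns to the strong form, but instead substitutes test functions into the mixed weak formulation~\eqref{eq:weakforward} --- taking $(\bsb{v},q,\bsb{g})=(\bsb{u},p,\bsb{f})$ for the sign property, and $(\bsb{v},q,\bsb{g})=(\bsb{u}_1,p_1,\bsb{f}_1)$ together with the roles-reversed identity for reciprocity, finishing with the symmetry $a(\bsb{u},\bsb{u}_1)=a(\bsb{u}_1,\bsb{u})$. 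The mathematical content coincides, since your divergence-theorem computation is precisely the derivation underlying that weak form, but the two treatments allocate the analytic burden differently. The paper's route is purely algebraic, delegates integrability and decay at infinity to the well-posedness of the weak problem in the weighted space $\bsb{\mathcal{V}}$, and reuses machinery that is needed anyway for the shape sensitivity analysis of Section~\ref{prop:Jw:proof}; your route is self-contained at the PDE level and makes explicit the one point the paper's proof leaves implicit, namely that the surface integral over a large sphere $S_\rho$ vanishes as $\rho\to\infty$ by the $O(\rho^{-1})$ velocity and $O(\rho^{-2})$ stress decay of exterior Stokes solutions (faster still under the no-net-force condition~\eqref{eq:NNF}), which is also what guarantees $a(\bsb{u},\bsb{u})<\infty$. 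Both arguments are complete for the stated purpose.
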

\begin{proof}
Both results stem from the weak versions (given in Section~\ref{prop:Jw:proof}) of equations~\eqref{eq:forward}, \eqref{eq:uD}. For the sign property, write~\eqref{eq:weakforward} for $(\bsb{v},q,\bsb{g})=(\bsb{u},p,\bsb{f})$ and recall that $\nabla\cdot\bsb{u}=0$. For the reciprocity property, write~\eqref{eq:weakforward} for $(\bsb{v},q,\bsb{g}) = (\bsb{u}_1,p_1,\bsb{f}_1)$ and the similar identity obtained by reversing the roles of the two solutions, then subtract the resulting equalities and use the symmetry property $a(\bsb{u}, \bsb{u}_1) = a(\bsb{u}_1,\bsb{u})$.
\end{proof}

We also note, for later reference, that we need to find the {solution} $(\hat{\bsb{u}}, \hat{p}, \hat{\bsb{f}})$ satisfying the rigid body translation problem,
\begin{equation}
  \text{(a) \ } -\mu \nabla^2 \bsb{\hat{u}} + \nabla \hat{p} = \bsb{0}, \quad \nabla \cdot\bsb{\hat{u}} = 0 \quad\text{in }\Omega, \qquad
  \text{(b) \ } \bsb{\hat{u}} = \bsb{e}_z \quad \text{on }\Gamma,
\label{eq:adj0}
\end{equation}
i.e. problem~\eqref{eq:forward}-\eqref{eq:uD} with $(U,u^{\sms{S}}) = (1,0)$ governing the flow created by towing the rigid body $\Omega$ at an unit axial speed. We then have
\begin{equation}
  \bsb{f}[u^{\sms{S}}] = \bsb{f}_0[u^{\sms{S}}] + U(u^{\sms{S}},\Gamma)\hat{\bsb{f}}, \label{eq:f0}
\end{equation}
where $\bsb{f}_0[u^{\sms{S}}]$ denotes the traction associated with the solution of problem~\eqref{eq:forward}-\eqref{eq:uD} with $U=0$. Using the reciprocity property of Lemma~\ref{lemma1}, the no-net-force condition~\eqref{eq:NNF} reads
\begin{equation}
  0 = F_0(\Gamma) U(u^{\sms{S}},\Gamma) + \intG{ \bsb{f}_0[u^{\sms{S}}],\bsb{e}_z }
  = F_0(\Gamma) U(u^{\sms{S}},\Gamma) + \intG{ \hat{f}_{\tau}, u^{\sms{S}} } ,
\label{NNF:exp}
\end{equation}
where $\hat{f}_{\tau} := \hat{\bsb{f}}\cdot \bsb{\tau}$ (with subscript $\tau$ used throughout to indicate the tangential projection of the given vector) and the drag coefficient $F_0(\Gamma)$, given by
\begin{equation}
  F_0(\Gamma) = \intG{\hat{\bsb{f}}, \bsb{e}_z}, \label{F0:def}
\end{equation}
is the net force incurred by towing the rigid body at a unit axial speed.

\section{Optimization problems and shape sensitivity analysis}
\label{sc:optimization}


Our aim is to find an optimal shape $\Gamma$ achieving low-Reynolds locomotion with maximum swimming efficiency. Following \cite{lighthill1952squirming}, the swimming efficiency of the body is defined as
\begin{equation}
J_{\sms{E}}(u^{\sms{S}},\Gamma) : = \frac{J_{\sms{D}}(u^{\sms{S}},\Gamma)}{J_{\sms{W}}(u^{\sms{S}},\Gamma)}  \label{eq:Jeff}
\end{equation}
where
\begin{equation}
  J_{\sms{W}}(u^{\sms{S}},\Gamma) : = \intG{\bsb{f}[u^{\sms{S}}] , \bsb{u}^{\sms{D}} } = \intG{ f_{\tau} , u^{\sms{S}} }, \label{eq:Jw}
\end{equation}
is the power dissipated in the actual motion described by~\eqref{eq:forward}-\eqref{eq:NNF} (with the second equality stemming from equations \eqref{eq:uD} and \eqref{eq:NNF}) and 
\begin{equation}
  J_{\sms{D}}(u^{\sms{S}},\Gamma) := F_0(\Gamma) U^2(u^{\sms{S}}, \Gamma). \label{eq:JD}
\end{equation}
is the power loss caused by towing a rigid body of the same shape at the axial translation velocity $U=U(u^{\sms{S}},\Gamma)$. 
Applying the sign property of Lemma~\ref{lemma1} to~\eqref{eq:Jw} and~\eqref{F0:def}, we deduce $J_{\sms{W}}(u^{\sms{S}},\Gamma) \geq 0$ and $F_0(\Gamma) \geq 0$ for any $\Gamma,\,u^{\sms{S}}$.
As a result, the swimming efficiency~\eqref{eq:Jeff} takes the form
\begin{equation}
  J_{\sms{E}}(u^{\sms{S}},\Gamma)
 = \frac{A_{\sms{D}}(u^{\sms{S}},u^{\sms{S}},\Gamma)}{A_{\sms{W}}(u^{\sms{S}},u^{\sms{S}},\Gamma)} \label{eq:Jeff2}
\end{equation}
with the positive bilinear forms $A_{\sms{D}}$ and $A_{\sms{W}}$ given by
\begin{equation}
  A_{\sms{D}}(u^{\sms{S}},w^{\sms{S}},\Gamma) = \intG{ \hat{f}_{\tau} , u^{\sms{S}} }\, \intG{ \hat{f}_{\tau} , w^{\sms{S}} } , \qquad
  A_{\sms{W}}(u^{\sms{S}},w^{\sms{S}},\Gamma) = F_0(\Gamma)\intG{ f_{\tau}[u^{\sms{S}}], w^{\sms{S}}}. \label{AD:AW:def}
\end{equation}

\subsection{Optimization problems}

Our main goal is to solve numerically the optimization problem
\begin{equation}
\max\limits_{u^{\sms{S}}, \Gamma} J_{\sms{E}}(u^{\sms{S}}, \Gamma) \quad \text{subject to} \quad C_{\nu}(\Gamma) := \nu(\Gamma) - \nu_0 = 0,
\label{eq:const_opt}
\end{equation}
where the swimming efficiency is to be maximized with respect to both the applied slip velocity profile $u^{\sms{S}}$ and the body shape $\Gamma$, subject to the reduced volume
\begin{equation}
  \nu(\Gamma) := 6\sqrt{\pi}V/A^{3/2} \label{nu:def}
\end{equation}
having a prescribed value $\nu_0$ (with $V=|\Omega_{\sms{S}}|$ and $A=|\Gamma|$ the volume and surface area of the microswimmer). We note that the reduced volume is a function of swimmer's shape, not size. That is, scaling the swimmer uniformly in all directions does not change the reduced volume. Additionally, we have $\nu(\Gamma)\leq1$ for all shapes, and  $\nu(\Gamma)=1$ if and only if $\Gamma$ is a sphere.

To solve problem~\eqref{eq:const_opt}, we take advantage of an available solution method for the partial maximization of $J_{\sms{E}}(u^{\sms{S}}, \Gamma)$ with fixed shape, see Section~\ref{sec:slipopt}. We thus define
\begin{equation}
  E(\Gamma) := \max_{u^{\sms{S}}} J_{\sms{E}}(u^{\sms{S}}, \Gamma) \label{E:def}
\end{equation}
and note that $E(\Gamma)$ is a shape functional since its value is entirely determined by $\Gamma$. We then recast the joint optimization problem~\eqref{eq:const_opt} as the constrained shape optimization problem
\begin{equation}
\max\limits_{\Gamma} E(\Gamma) \quad \text{subject to} \quad C_{\nu}(\Gamma) = 0.
 \label{eq:optJM}
\end{equation}
The equality constraint will be handled using the augmented Lagrangian method (ALM) described in Section \ref{sc:NumSch}, the constrained problem~\eqref{eq:optJM} thus being treated as a sequence of unconstrained problems.

We will also consider, for comparison purposes, the constrained shape optimization problem
\begin{equation}\label{eq:opt_drag}
  \min\limits_{\Gamma} J_{\sms{drag}} (\Gamma) \quad \text{subject to} \quad \nu(\Gamma) - \nu_0 = 0.
\end{equation}
for the normalized drag force $J_{\sms{drag}} (\Gamma)$, which is defined by $J_{\sms{drag}}(\Gamma) := F_0(\Gamma)/6\pi \mu r$
for a (passive) rigid body in Stokes flow ($r=\sqrt[3]{3V/(4\pi)}$ being the radius of the ball having the same volume $V$ as $\Omega_{\sms{S}}$). It is worth noting that $J_{\sms{drag}}(\Gamma)$ is dimensionless and does not depend on the size of $\Omega_{\sms{S}}$.

\subsection{Partial optimization with fixed shape}
\label{sec:slipopt}

Here we present an improved version of the method proposed in~\cite{guo2021slip} for solving the slip optimization problem~\eqref{E:def} with a fixed shape $\Gamma$. This step exploits the fact that $J_{\sms{E}}(u^{\sms{S}},\Gamma)$ is, for given $\Gamma$, expressed by~\eqref{eq:Jeff2} as a Rayleigh quotient, so that its maximum value is equal to the largest (positive) eigenvalue of the symmetric generalized eigenvalue problem
\begin{equation}
  \text{Find }z^{\sms{S}}\in H^{1/2}(\Gamma), \lambda\in\mathbb{R} \quad \text{such that} \qquad
  A_{\sms{D}}(z^{\sms{S}},w^{\sms{S}},\Gamma) - \lambda A_{\sms{W}}(z^{\sms{S}},w^{\sms{S}},\Gamma) = 0 \qquad \text{for all }w^{\sms{S}}\in H^{1/2}(\Gamma) \label{eigv:JE}
\end{equation}
the corresponding eigenfunction $z^{\sms{S}}$ defining a slip velocity profile producing the optimal efficiency. Problem~\eqref{eigv:JE} turns out to be easy to solve: since the bilinear form $A_{\sms{D}}$ is of rank one (while $A_{\sms{W}}$ is positive definite), there is only one nonzero eigenvalue $\lambda>0$, whose multiplicity is 1.

Let the fields $(\tilde{\bsb{u}},\tilde{p})$ solve the axisymmetric Stokes flow problem
\begin{equation}
  \text{(a) \ }-\mu \nabla^2 \tilde{\bsb{u}} + \nabla\tilde{p} = \bsb{0}, \quad
  \nabla \cdot\tilde{\bsb{u}} = 0, \quad \text{in }\Omega, \qquad
  \text{(b) \ }\tilde{\bsb{f}}\cdot\bsb{\tau}=\hat{\bsb{f}}\cdot\bsb{\tau}, \qquad
  \text{(c) \ }\tilde{\bsb{u}}\cdot\bsb{n}=0 \quad \text{on }\Gamma, \label{eq:optslip}
\end{equation}
where $\tilde{\bsb{f}}$ is the traction vector associated with $(\tilde{\bsb{u}},\tilde{p})$. Using the above solution, the slip velocity defined by
\begin{equation}
  z^{\sms{S}} := \tilde{\bsb{u}}\cdot\bsb{\tau}, \label{zS:def}
\end{equation}
then yields the maximal swimming efficiency for a fixed shape $\Gamma$:
\begin{proposition}\label{zS:opt}
The slip velocity profile $z^{\sms{S}}$ defined for a given shape $\Gamma$ by~\eqref{zS:def} in terms of the solution of problem~\eqref{eq:optslip} solves the partial maximization problem~\eqref{E:def}: we have
\begin{equation}
  z^{\sms{S}} = \argmax_{u^{\sms{S}}} J_{\sms{E}}(u^{\sms{S}},\Gamma), \qquad
  E(\Gamma) = -\frac{U(z^{\sms{S}},\Gamma)}{1+U(z^{\sms{S}},\Gamma)} \geq 0, \label{EG:expr}
\end{equation}
\end{proposition}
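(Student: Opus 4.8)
The plan is to exploit the Rayleigh-quotient structure~\eqref{eq:Jeff2} together with the rank-one nature of $A_{\sms{D}}$, and to reduce everything to the traction decomposition~\eqref{eq:f0}. First I would record the eigenvalue characterization of the partial maximum: since $A_{\sms{W}}$ is positive definite while $A_{\sms{D}}(\cdot,\cdot,\Gamma)=\intG{\hat{f}_{\tau},\cdot}\,\intG{\hat{f}_{\tau},\cdot}$ has rank one, the generalized eigenvalue problem~\eqref{eigv:JE} admits a single nonzero eigenvalue $\lambda^{\star}>0$ (assuming $\hat{f}_{\tau}\not\equiv0$) with a one-dimensional eigenspace, and $E(\Gamma)=\max_{u^{\sms{S}}}J_{\sms{E}}(u^{\sms{S}},\Gamma)=\lambda^{\star}$, attained exactly on that eigenspace. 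It therefore suffices to show that $z^{\sms{S}}$ is an eigenfunction associated with a positive eigenvalue, and to identify that eigenvalue.

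The crux is the observation that $\tilde{\bsb{u}}$ is nothing but the $U=0$ swimming solution driven by the slip $z^{\sms{S}}$. Indeed, condition~\eqref{eq:optslip}(c) forces $\tilde{\bsb{u}}\cdot\bsb{n}=0$ on $\Gamma$, so by the tangential/normal decomposition~\eqref{tn:components} and the definition~\eqref{zS:def} one has $\tilde{\bsb{u}}=z^{\sms{S}}\bsb{\tau}$ there; this is precisely the boundary data of problem~\eqref{eq:forward}-\eqref{eq:uD} with $(U,u^{\sms{S}})=(0,z^{\sms{S}})$, so uniqueness of the exterior Stokes problem gives $\tilde{\bsb{f}}=\bsb{f}_0[z^{\sms{S}}]$. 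Substituting this into~\eqref{eq:f0}, projecting tangentially, and using condition~\eqref{eq:optslip}(b) in the form $\tilde{f}_{\tau}=\hat{f}_{\tau}$, I obtain
\[
 f_{\tau}[z^{\sms{S}}] = \tilde{f}_{\tau} + U(z^{\sms{S}},\Gamma)\,\hat{f}_{\tau} = \bigl(1+U(z^{\sms{S}},\Gamma)\bigr)\hat{f}_{\tau},
\]
i.e. the optimal traction is \emph{proportional} to $\hat{f}_{\tau}$ --- exactly the structure the eigenvalue equation demands.

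With this proportionality in hand, I would verify~\eqref{eigv:JE} by direct substitution: for every $w^{\sms{S}}$ we have $A_{\sms{D}}(z^{\sms{S}},w^{\sms{S}},\Gamma)=\intG{\hat{f}_{\tau},z^{\sms{S}}}\,\intG{\hat{f}_{\tau},w^{\sms{S}}}$ and $A_{\sms{W}}(z^{\sms{S}},w^{\sms{S}},\Gamma)=F_0(\Gamma)\bigl(1+U(z^{\sms{S}},\Gamma)\bigr)\intG{\hat{f}_{\tau},w^{\sms{S}}}$, so the two are the same linear functional of $w^{\sms{S}}$ up to the scalar factor $\lambda=\intG{\hat{f}_{\tau},z^{\sms{S}}}/\bigl(F_0(\Gamma)(1+U)\bigr)$; hence $z^{\sms{S}}$ solves~\eqref{eigv:JE}. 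The no-net-force identity~\eqref{NNF:exp} gives $\intG{\hat{f}_{\tau},z^{\sms{S}}}=-F_0(\Gamma)\,U(z^{\sms{S}},\Gamma)$, whence $\lambda=-U/(1+U)$. Since $\lambda=J_{\sms{E}}(z^{\sms{S}},\Gamma)$ is a quotient of the nonnegative $A_{\sms{D}}$ by the positive $A_{\sms{W}}$, it is $\geq0$; being the nonzero eigenvalue in the generic case, it coincides with the unique positive eigenvalue $\lambda^{\star}$, so $z^{\sms{S}}$ is a maximizer and $E(\Gamma)=-U/(1+U)\geq0$, as claimed.

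I expect the main obstacle to be the careful justification that $\tilde{\bsb{u}}$ coincides with the $U=0$ swimming solution for the slip $z^{\sms{S}}$ --- this is where the mixed boundary conditions~\eqref{eq:optslip}(b)-(c) and the definition~\eqref{zS:def} must interlock correctly --- together with confirming that the eigenvalue so produced is genuinely the largest rather than the trivial one, which hinges on $\hat{f}_{\tau}\not\equiv0$ and on the rank-one/positive-definite dichotomy of the pair $(A_{\sms{D}},A_{\sms{W}})$.
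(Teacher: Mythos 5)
Your proposal is correct and takes essentially the same route as the paper's proof: you derive the key proportionality $f_{\tau}[z^{\sms{S}}]=\bigl(1+U(z^{\sms{S}},\Gamma)\bigr)\hat{f}_{\tau}$ (the paper's identity~\eqref{fs:fsh}) from the mixed boundary conditions of~\eqref{eq:optslip} and the traction decomposition~\eqref{eq:f0}, substitute it together with~\eqref{NNF:exp} into the bilinear forms~\eqref{AD:AW:def} to verify the eigenvalue equation~\eqref{eigv:JE} with $\lambda=-U/(1+U)$, and conclude by the rank-one/positive-definite structure of the pair $(A_{\sms{D}},A_{\sms{W}})$. The only difference is presentational: you spell out inside the proof the derivation of~\eqref{fs:fsh} (which the paper establishes separately and cites), making your write-up slightly more self-contained.
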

\begin{proof}
Let $z^{\sms{S}}$ be given by~\eqref{zS:def}. Using~\eqref{NNF:exp} and~\eqref{fs:fsh} in~\eqref{AD:AW:def}, we have
\begin{alignat}{2}
  A_{\sms{D}}(z^{\sms{S}},w^{\sms{S}},\Gamma)
  &= \intG{ \hat{f}_{\tau}, z^{\sms{S}}}\,\intG{ \hat{f}_{\tau}, w^{\sms{S}}}
  &&= -F_0(\Gamma) U(z^{\sms{S}},\Gamma)\,\intG{ \hat{f}_{\tau}, w^{\sms{S}}} \\
  A_{\sms{W}}(z^{\sms{S}},w^{\sms{S}},\Gamma)
  &= F_0(\Gamma)\intG{ f_{\tau}[z^{\sms{S}}], w^{\sms{S}}}
  &&= F_0(\Gamma) \big( 1+U(z^{\sms{S}},\Gamma) \big) \intG{ \hat{f}_{\tau}, w^{\sms{S}}}.
\end{alignat}
The eigenvalue equation~\eqref{eigv:JE} is therefore verified by $z^{\sms{S}}$ and $\lambda=E(\Gamma)$ given by~\eqref{EG:expr}. Since problem~\eqref{eigv:JE} has only one nonzero eigenvalue, the pair $(z^{\sms{S}},\lambda)$ provides the maximal swimming efficiency for given $\Gamma$.
\end{proof}
The treatment given in~\cite{guo2021slip} needs the solution of one flow problem per basis function of the finite-dimensional approximation of $u^{\sms{S}}$ (see Section~\ref{sc:finite}), while the present version replaces this task with solving only~\eqref{eq:optslip} (the solution of the adjoint problem~\eqref{eq:adj0} being needed in both versions).

\subsection{Shape sensitivity analysis}

In this section, we collect available shape derivative concepts that fit the present needs. Rigorous expositions of shape sensitivity theory are available in~\cite[Chap. 5]{henrot2018} and other monographs.
Let $\Omega_{\sms{A}}$ denote a fixed domain chosen so that $\Omega\Subset \Omega_{\sms{A}}$ always holds for the shape optimization problem of interest. 
Shape changes are described with the help of transformation velocity fields, i.e., vector fields $\bsb{\theta} : \Omega_{\sms{A}}\to \mathbb{R}^3$ such that $\bsb{\theta}=\bsb{0}$ in a neighborhood of $\pd\Omega_{\sms{A}}$; we then extend $\bsb{\theta}$ by zero in $\mathbb{R}^3 \!\setminus\! \overline{\Omega_{\sms{A}}}$. Shape perturbations of the body boundary can, in the present context, be mathematically described using a pseudo-time $\eta$ and a geometrical transform of the form
\begin{equation}
\mathbb{R}^3\ni\bsb{x}(t,\phi)   \mapsto \bsb{x}_{\eta}(t,\phi) = \bsb{x}(t,\phi) + \eta \bsb{\theta}(t,\phi), \qquad 0\le t \le\pi,\quad 0\le\phi < 2\pi.
\label{eq:xeta}
\end{equation}
They allow to define a parametrized family of domains $\Omega_\eta (\bsb{\theta}) : = (\bsb{I}+\eta\bsb{\theta})(\Omega)$ for any given ``initial" domain $\Omega$. The affine format \eqref{eq:xeta} is sufficient for defining first-order derivatives at $\eta=0$. In this work, perturbations $\Gamma_{\eta}(\bsb{\theta})$ of an axisymmetric surface $\Gamma$ are assumed to be defined by
\begin{equation}
  \Gamma_{\eta}(\bsb{\theta})\ni \bsb{x}_{\eta}(t,\phi)
  = \bsb{x}(t,\phi) + \eta\bsb{\theta}(t,\phi), \quad 0\le t \le \pi, \quad 0\le\phi < 2\pi \label{eq:perturbgamma}
\end{equation}
in terms of axisymmetry-preserving transformation velocity fields $\bsb{\theta}$ having the form
\begin{equation}\label{eq:theta}
\bsb{\theta}(t,\phi)=\theta_{\tau}(t)\bsb{\tau}(t,\phi) + \theta_n(t)\bsb{n}(t,\phi), \qquad 0\le t \le\pi,\quad 0\le\phi < 2\pi
\end{equation}
on $\Gamma$, with $\bsb{\tau}$ and $\bsb{n}$ given by~\eqref{tn:axi} and the components $\theta_{\tau},\theta_n$ satisfying
\begin{equation}
\text{(a)}\quad \theta_{\tau}(0)=\theta_{\tau}(\pi)=0,\quad \text{(b)} \quad \theta'_n(0)=\theta'_n(\pi)=0.  \label{eq:thetacondition}
\end{equation}
The requirement (\ref{eq:thetacondition}a) ensures that $\Gamma_\eta$ remains closed (by precluding ``tearing" at the poles), and the smoothness of $\Gamma_{\eta}(\bsb{\theta})$ at the poles is maintained by (\ref{eq:thetacondition}b). Perturbed domains $\Omega_\eta (\bsb{\theta}) = (\bsb{I}+\eta\bsb{\theta})(\Omega)$ with boundary $\Gamma_{\eta}(\bsb{\theta})$ can then be defined using arbitrary extensions of $\bsb{\theta}$ to $\Omega$.

All derivatives are implicitly taken at some given configuration $\Omega$, i.e., at initial ``time" $\eta=0$. The ``initial" material derivative $\overstar{\bsb{a}}$ of some (scalar or tensor-valued) field variable $\bsb{a}(\bsb{x},\eta)$ is defined as
\begin{equation}
\overstar{\bsb{a}}(\bsb{x}) = \lim_{\eta\to 0} \frac{1}{\eta} \Big( \bsb{a}(\bsb{x}_{\eta},\eta) - \bsb{a}(\bsb{x},0)\Big)
\quad \bsb{x}\in \Omega,
\end{equation}
and the material derivative of gradients and divergences of tensor fields are given by
\begin{equation}
\text{(a)} \quad (\nabla \bsb{a})^{\scriptsize\star} = \nabla \overstar{\bsb{a}}-\nabla \bsb{a} \cdot \nabla \bsb{\theta},
\quad
\text{(b)} \quad (\nabla\cdot \bsb{a})^{\scriptsize\star} = \nabla\cdot \overstar{\bsb{a}}-\nabla \bsb{a} \!:\! \nabla \bsb{\theta}.
\label{dd:grad}
\end{equation}
Likewise, the first-order ``initial'' derivative $J'$ of a shape functional $J$ is defined as
\begin{equation}
  J'(\Omega;\bsb{\theta}) = \lim_{\eta\to0} \frac{1}{\eta}\Big( J(\Omega_{\eta}(\bsb{\theta}))-J(\Omega) \Big), \label{dd:J:def}
\end{equation}
and its practical evaluation relies on the fact that the derivatives of generic integrals
\begin{equation}
  \text{(a)} \quad I_{\sms{V}}(\eta) = \int_{\Omega_{\eta}(\bsb{\theta})} F(\cdot,\eta) \td V, \qquad
  \text{(b)} \quad I_{\sms{S}}(\eta) = \int_{\Gamma_{\eta}(\bsb{\theta})} F(\cdot,\eta) \td S, \label{IVS:def}
\end{equation}
on variable domains or surfaces are given by the material differentiation identities
\begin{equation}
  \text{(a) \ } \frac{\td I_{\sms{V}}}{\td\eta}\Big|_{\eta=0} = \int_{\Omega} \big[ \overstar{F} + F(\cdot,0)\,\nabla\cdot\bsb{\theta} \big] \td V, \qquad
  \text{(b) \ } \frac{\td I_{\sms{S}}}{\td\eta}\Big|_{\eta=0} = \int_{\Gamma} \big[ \overstar{F} + F(\cdot,0)\,\divs\bsb{\theta} \big] \td S. \label{dd:IVS}
\end{equation}
with the surface divergence $\divs \bsb{\theta}$ in~(\ref{dd:IVS}b) given by~\eqref{divS:def} in Appendix \ref{apd:proofprop}.

The \textit{structure theorem for shape derivatives} (see e.g. \cite[Sec.~5.9]{henrot2018}) states that the shape derivative of any shape functional can be expressed as a linear functional in the normal transformation velocity $\theta_n$. This result conforms to intuitive geometrical facts: (i) the shape of $\Omega_{\eta}(\bsb{\theta})$ is determined by that of $\Gamma_{\eta}(\bsb{\theta})$, and (ii) tangential components of $\bsb{\theta}$ leave $\Omega$ unchanged at leading order $O(\eta)$. Here we provide, as an example and for later reference, the derivative of the reduced volume~\eqref{nu:def}, which is a shape functional:
\begin{proposition}\label{prop:geom}
The shape derivative of the reduced volume $\nu(\Gamma)$ defined by~\eqref{nu:def} is given by
\begin{equation}
  \frac{\nu'(\Gamma;{\bsb{\theta}})}{\nu(\Gamma)}
 = \frac{V'(\Gamma;{\bsb{\theta}})}{V} - \frac{3A'(\Gamma;{\bsb{\theta}})}{2A}. \label{dd:nu}
\end{equation}
with the shape derivatives of the volume $V=|\Omega_{\sms{S}}|$ and area $A=|\Gamma|$ given (as linear functionals on $\theta_n$) by
\begin{equation}
  V'(\Gamma;{\bsb{\theta}}) = -\int_{\Gamma}\theta_n \,\td S = -2\pi\!\int_0^\pi \theta_n R\alpha \,\td t, \qquad
  A'(\Gamma;{\bsb{\theta}}) = \int_{\Gamma}  \divs {\bsb{\theta}} \,\td S = 2\pi\int_0^\pi \big(Z' - \kappa R\alpha \big)\theta_n  \,\td t. \label{eq:Sprime}
\end{equation}
\end{proposition}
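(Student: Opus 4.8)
The plan is to reduce \eqref{dd:nu} to a chain rule and then compute $V'$ and $A'$ separately from the material differentiation identities \eqref{dd:IVS}. Since $\nu=6\sqrt\pi\,VA^{-3/2}$ with $V,A>0$, I would write $\log\nu=\mathrm{const}+\log V-\tfrac{3}{2}\log A$ and differentiate at $\eta=0$, giving $\nu'/\nu=V'/V-\tfrac{3}{2}A'/A$, which is precisely \eqref{dd:nu}. All of the substance then lies in the two factor derivatives.

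For the volume, I would apply (\ref{dd:IVS}a) to $V=\int_{\Omega_{\sms{S}}}\td V$ with the constant integrand $F\equiv1$ (so that $\overstar{F}=0$), obtaining $V'=\int_{\Omega_{\sms{S}}}\nabla\cdot\bsb{\theta}\,\td V$, and then invoke the divergence theorem on $\Omega_{\sms{S}}$. Because $\bsb{n}$ is oriented \emph{inward} of the body, the outward normal of $\Omega_{\sms{S}}$ is $-\bsb{n}$; this accounts for the minus sign in $V'=-\int_\Gamma\bsb{\theta}\cdot\bsb{n}\,\td S=-\int_\Gamma\theta_n\,\td S$. Reducing the surface integral to the meridian through \eqref{iG:1D} gives the first formula in \eqref{eq:Sprime}.

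For the area, (\ref{dd:IVS}b) with $F\equiv1$ gives $A'=\int_\Gamma\divs\bsb{\theta}\,\td S$, so everything hinges on evaluating $\divs\bsb{\theta}$ for the axisymmetric field \eqref{eq:theta}. I would compute it as the trace of the surface gradient over the orthonormal tangent frame $(\bsb{\tau},\bsb{e}_\phi)$, consistent with the definition \eqref{divS:def}, namely $\divs\bsb{\theta}=\alpha^{-1}\bsb{\tau}\cdot\pd_t\bsb{\theta}+R^{-1}\bsb{e}_\phi\cdot\pd_\phi\bsb{\theta}$, where $\bsb{e}_\phi:=\pd_\phi\bsb{e}_r$. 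The meridian term is handled by the Frenet relations \eqref{eq:frenet} and yields $\alpha^{-1}\theta_\tau'-\kappa\theta_n$; the azimuthal term, obtained from $\pd_\phi\bsb{\tau}=(R'/\alpha)\bsb{e}_\phi$ and $\pd_\phi\bsb{n}=(Z'/\alpha)\bsb{e}_\phi$ (read off \eqref{tn:axi}), yields $(R\alpha)^{-1}(R'\theta_\tau+Z'\theta_n)$. Multiplying the sum by the area Jacobian $R\alpha$ collapses the tangential contributions into the total derivative $\pd_t(R\theta_\tau)$, leaving $R\alpha\,\divs\bsb{\theta}=\pd_t(R\theta_\tau)+(Z'-\kappa R\alpha)\theta_n$. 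Integrating in $t$, the total-derivative term vanishes because $R(0)=R(\pi)=0$ by condition (\ref{eq:rz}a), which also removes the dependence on $\theta_\tau$ in accordance with the structure theorem, and what remains is the second formula in \eqref{eq:Sprime}.

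I expect the surface-divergence computation in the area step to be the only delicate point: one must track the rotation of the moving frame $(\bsb{\tau},\bsb{n},\bsb{e}_\phi)$ correctly, since the azimuthal curvature contribution $Z'/(R\alpha)$ and the Frenet cross term $-\kappa\theta_n$ both enter $\divs\bsb{\theta}$ and recombine into the compact factor $Z'-\kappa R\alpha$. The remaining reductions --- the chain rule, the divergence theorem for $V'$, and the vanishing of the tangential boundary term --- are routine. As a sanity check I would verify both formulas on a sphere of radius $a$ under uniform dilation, where they must reproduce $V'=4\pi a^2$ and $A'=8\pi a$.
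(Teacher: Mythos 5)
Your proposal is correct and follows essentially the same route as the paper's (very terse) justification: the chain rule for $\nu=6\sqrt{\pi}VA^{-3/2}$, the material differentiation identities~\eqref{dd:IVS} with $F\equiv1$, the divergence theorem with the sign fixed by the inward orientation of $\bsb{n}$, and the surface-divergence computation whose tangential part $\pd_t(R\theta_\tau)$ integrates to zero (your appeal to $R(0)=R(\pi)=0$ works just as well as the paper's appeal to (\ref{eq:thetacondition}a), since either factor kills the boundary term). A minor but worthwhile point in your favor: your from-scratch frame computation gives $R\alpha\,\divs\bsb{\theta}=\pd_t(R\theta_\tau)+(Z'-\kappa R\alpha)\theta_n$, which matches~\eqref{eq:Sprime} and passes your sphere check, whereas the paper's quoted formula~\eqref{divS:def} writes $(Z'-\kappa R)$ and is consistent only under arc-length parametrization ($\alpha\equiv1$), so your derivation is the one that makes the proposition's constant self-contained and unambiguous.
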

For $A'$, we have used~\eqref{divS:def} and the fact that $\int_0^{\pi} (R\theta_{\tau})' \,\td t=0$ due to (\ref{eq:thetacondition}a).

\subsection{Shape sensitivities of swimming efficiency and normalized drag force}
\label{sec:shape_sens}

We begin by expressing the shape derivative of the swimming efficiency in terms of shape sensitivities of the power loss functionals:

\begin{lemma}\label{dd:lambdamax}
The shape derivative of $E(\Gamma)$ is given by
\begin{equation}
  E'(\Gamma;\bsb{\theta})
 = \frac{J'_{\sms{D}}(z^{\sms{S}},\Gamma;\bsb{\theta}) - E(\Gamma)J'_{\sms{W}}(z^{\sms{S}},\Gamma;\bsb{\theta})}{J_{\sms{W}}(z^{\sms{S}},\Gamma)}
\end{equation}
where $E(\Gamma)$ and the shape derivatives $J'_{\sms{W}}$ and $J'_{\sms{D}}=\big[ F_0U^2 \big]'$ are taken with the slip velocity $z^{\sms{S}}$ given in \eqref{zS:def} (kept fixed to its optimum value at current $\Gamma$).
\end{lemma}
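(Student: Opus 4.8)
The plan is to treat $E(\Gamma)=\max_{u^{\sms{S}}}J_{\sms{E}}(u^{\sms{S}},\Gamma)$ as an optimal-value function and invoke an envelope argument: the only surviving contribution to $E'(\Gamma;\bsb{\theta})$ comes from the explicit shape dependence of $J_{\sms{E}}$ evaluated at the fixed optimal slip $z^{\sms{S}}$, while the implicit contribution through the shape dependence of the maximizer drops out by stationarity. The claimed formula then follows by a quotient rule applied to $J_{\sms{E}}=J_{\sms{D}}/J_{\sms{W}}$.

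First I would fix the function space by parametrizing slip profiles over the reference interval $[0,\pi]$, so that varying $\eta$ in the perturbation $\Gamma_\eta(\bsb{\theta})$ changes only the coefficients entering $J_{\sms{E}}$ and not the admissible set of profiles (this is exactly what ``keeping $z^{\sms{S}}$ fixed'' means in the material-derivative framework). Writing $\eta\mapsto(z^{\sms{S}}_\eta,\lambda_\eta)$ for the branch of the generalized eigenproblem~\eqref{eigv:JE} issuing from $(z^{\sms{S}},E(\Gamma))$ at $\eta=0$, I would record that $\lambda_\eta=E(\Gamma_\eta)$ and that this branch is differentiable: by Proposition~\ref{zS:opt} the relevant eigenvalue is the unique nonzero one and is simple, so classical perturbation theory for simple eigenvalues of the symmetric pencil $(A_{\sms{D}},A_{\sms{W}})$ applies, once the coefficients are shown to depend smoothly on $\eta$ (which follows from smooth shape dependence of the two underlying Stokes solutions $\hat{\bsb{f}}$ and $\tilde{\bsb{f}}$).

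The core step is the envelope/stationarity identity. Differentiating $E(\Gamma_\eta)=J_{\sms{E}}(z^{\sms{S}}_\eta,\Gamma_\eta)$ by the chain rule gives
\[
E'(\Gamma;\bsb{\theta}) = \partial_{u^{\sms{S}}}J_{\sms{E}}(z^{\sms{S}},\Gamma)\big[\,\tfrac{d}{d\eta}z^{\sms{S}}_\eta\big|_{0}\,\big] + J'_{\sms{E}}(z^{\sms{S}},\Gamma;\bsb{\theta}),
\]
where $J'_{\sms{E}}(z^{\sms{S}},\Gamma;\bsb{\theta})$ is the shape derivative taken with $z^{\sms{S}}$ held fixed. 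Because $z^{\sms{S}}$ maximizes $u^{\sms{S}}\mapsto J_{\sms{E}}(u^{\sms{S}},\Gamma)$ over $H^{1/2}(\Gamma)$ for the current shape (Proposition~\ref{zS:opt}), the first-order optimality condition $\partial_{u^{\sms{S}}}J_{\sms{E}}(z^{\sms{S}},\Gamma)[\,w^{\sms{S}}\,]=0$ holds for every admissible variation $w^{\sms{S}}$, in particular for $w^{\sms{S}}=\frac{d}{d\eta}z^{\sms{S}}_\eta|_0$, so the first term vanishes. Equivalently, this is the stationarity of the Rayleigh quotient~\eqref{eq:Jeff2} at an eigenfunction; the degree-zero homogeneity of $J_{\sms{E}}$ in $u^{\sms{S}}$ (both $J_{\sms{D}}$ and $J_{\sms{W}}$ being quadratic) makes the argument insensitive to any normalization drift of $z^{\sms{S}}_\eta$ along the branch.

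It then remains to evaluate $J'_{\sms{E}}(z^{\sms{S}},\Gamma;\bsb{\theta})$ by the quotient rule on $J_{\sms{E}}=J_{\sms{D}}/J_{\sms{W}}$ (all at fixed $z^{\sms{S}}$):
\[
J'_{\sms{E}}(z^{\sms{S}},\Gamma;\bsb{\theta}) = \frac{J'_{\sms{D}}(z^{\sms{S}},\Gamma;\bsb{\theta})\,J_{\sms{W}}(z^{\sms{S}},\Gamma) - J_{\sms{D}}(z^{\sms{S}},\Gamma)\,J'_{\sms{W}}(z^{\sms{S}},\Gamma;\bsb{\theta})}{J_{\sms{W}}(z^{\sms{S}},\Gamma)^2},
\]
and substituting $J_{\sms{D}}(z^{\sms{S}},\Gamma)/J_{\sms{W}}(z^{\sms{S}},\Gamma)=E(\Gamma)$ collapses this to the stated formula. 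The main obstacle I anticipate is the rigorous justification of the envelope step, namely the differentiability of the maximizing branch $z^{\sms{S}}_\eta$ and the legitimacy of exchanging maximization and differentiation; simplicity of the nonzero eigenvalue together with smooth shape dependence of the two Stokes solutions is what makes this sound. The explicit evaluation of $J'_{\sms{D}}$ and $J'_{\sms{W}}$ is a separate, heavier computation deferred to Section~\ref{prop:Jw:proof}.
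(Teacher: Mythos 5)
Your proposal is correct and is essentially the paper's own argument in different language: the envelope/stationarity step (first-order optimality of $J_{\sms{E}}$ at $z^{\sms{S}}$ killing the maximizer-derivative term) is exactly equivalent to the paper's move of differentiating the eigenvalue equation~\eqref{eigv:JE}, testing with $w^{\sms{S}}=z^{\sms{S}}$, and using symmetry of $A_{\sms{D}},A_{\sms{W}}$ to cancel the $\overstar{z}{}^{\sms{S}}$ terms, since stationarity of the Rayleigh quotient~\eqref{eq:Jeff2} at an eigenfunction is the same statement as~\eqref{eigv:JE}. Your concluding quotient rule likewise mirrors the paper's substitution $A_{\sms{D}}(z^{\sms{S}},z^{\sms{S}},\Gamma)=F_0 J_{\sms{D}}$ and $A_{\sms{W}}(z^{\sms{S}},z^{\sms{S}},\Gamma)=F_0 J_{\sms{W}}$, and your remarks on simplicity of the nonzero eigenvalue and normalization-independence only make explicit hypotheses the paper leaves implicit.
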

\begin{proof}
The maximal efficiency $E(\Gamma)$ and associated optimal slip velocity $z^{\sms{S}}$ at any given $\Gamma$ are related through equation~\eqref{eigv:JE}, i.e.:
\begin{equation}
  A_{\sms{D}}(z^{\sms{S}},w^{\sms{S}},\Gamma) - E(\Gamma) A_{\sms{W}}(z^{\sms{S}},w^{\sms{S}},\Gamma) = 0. \label{eigv:eq}
\end{equation}
for any shape perturbation about the current shape $\Gamma$. The shape derivative at $\Gamma$ of \eqref{eigv:eq} thus yields
\begin{equation}
  A_{\sms{D}}(\overstar{z}{}^{\sms{S}},w^{\sms{S}},\Gamma) - E(\Gamma) A_{\sms{W}}(\overstar{z}{}^{\sms{S}},w^{\sms{S}},\Gamma)
   + A'_{\sms{D}}(z^{\sms{S}},w^{\sms{S}},\Gamma;\bsb{\theta}) - E(\Gamma) A'_{\sms{W}}(z^{\sms{S}},w^{\sms{S}},\Gamma;\bsb{\theta})
   - E'(\Gamma;\bsb{\theta}) A_{\sms{W}}(z^{\sms{S}},w^{\sms{S}},\Gamma) = 0
\end{equation}
where $A'_{\sms{D}}$ and $A'_{\sms{W}}$ are shape derivatives taken with $z^{\sms{S}}$ fixed (i.e. $\overstar{z}{}^{\sms{S}}=0$), while as usual $\overstar{w}{}^{\sms{S}}=0$ may be assumed for the test functions in this derivation. Now, setting $w^{\sms{S}}=z^{\sms{S}}$ in the above equation and observing that the first two terms cancel due to~\eqref{eigv:eq} and the bilinear forms $A_{\sms{D}},A_{\sms{W}}$ being symmetric, we obtain
\begin{equation}
  E'(\Gamma;\bsb{\theta})
 = \frac{A'_{\sms{D}}(z^{\sms{S}},z^{\sms{S}},\Gamma;\bsb{\theta}) - E(\Gamma) A'_{\sms{W}}(z^{\sms{S}},z^{\sms{S}},\Gamma;\bsb{\theta})}{A_{\sms{W}}(z^{\sms{S}},z^{\sms{S}},\Gamma)}.
\end{equation}
The claimed formula finally results from recalling the definitions~\eqref{AD:AW:def} of $A_{\sms{D}},A_{\sms{W}}$, which imply $A_{\sms{D}}(z^{\sms{S}},w^{\sms{S}},\Gamma)=F_0(\Gamma) J_{\sms{D}}(z^{\sms{S}},\Gamma)$ and $A_{\sms{W}}(z^{\sms{S}},w^{\sms{S}},\Gamma)=F_0(\Gamma) J_{\sms{W}}(z^{\sms{S}},\Gamma)$.
\end{proof}

The next step then consists in deriving formulas for the shape sensitivities of the functionals involved in Lemma~\ref{dd:lambdamax}. The latter depend on $\Gamma$ implicitly through the forward or adjoint solution. In particular, applying the material differentiation formula~(\ref{dd:IVS}b) to~\eqref{eq:Jw} and~\eqref{F0:def}, we have
\begin{align}
  J_{\sms{W}}'(\Gamma;\bsb{\theta})
 &= \intG{ \overstar{\bsb{f}} , u^{\sms{S}}\bsb{\tau} } + \intG{ \overstar{\bsb{f}} , \bsb{e}_z }U
 + \intG{ \bsb{f} , u^{\sms{S}}\overstar{\bsb{\tau}} }
 + \intG{ \bsb{f} , (u^{\sms{S}}\bsb{\tau} + U\bsb{e}_z)\,\divs \bsb{\theta} } \label{eq:JWprime1} \\
  F'_0(\Gamma;\bsb{\theta})
 &= \intG{\overstar{\hat{\bsb{f}}}, \bsb{e}_z} + \intG{\hat{\bsb{f}}, \bsb{e}_z\divs \bsb{\theta}}. \label{eq:F0prime}
\end{align}
in terms of the material derivatives $\overstar{\bsb{f}} , \overstar{U}$ and $\overstar{\hat{\bsb{f}}}$ of solution components and $\overstar{\bsb{\tau}}$ of the unit tangent, and having used the no-net-force identity $\intG{ \bsb{f}, \bsb{e}_z } \overstar{U}= 0$ for~\eqref{eq:JWprime1}. However, as in many other similar situations, expressions of $J_{\sms{W}}'(\Gamma;\bsb{\theta})$, $F'_0(\Gamma;\bsb{\theta})$ and $U'(\Gamma;\bsb{\theta})$ free of solution derivatives can be obtained from combinations of the weak forms of the forward and derivative problems written with suitably chosen test functions. In addition, the resulting expressions are recast, using curvilinear coordinates, as boundary integrals. This somewhat lengthy process, expounded in Section~\ref{prop:Jw:proof}, yields the following material derivative formulas, which are free of any solution material derivative and have a form suitable for a direct implementation using the output of a BIE solver:\enlargethispage*{1ex}
\begin{proposition}\label{prop:Jw}
Consider a shape perturbation with transformation velocity $\bsb{\theta}$, and assume the slip velocity $u^{\sms{S}}$ to be convected (i.e. $\overstar{u}{}^{\sms{S}}=0$). The derivatives of $J_{\sms{W}}(u^{\sms{S}},\Gamma)$, $U(u^{\sms{S}},\Gamma)$ and $F_0(\Gamma)$ are then given by
\begin{align}
 J'_{\sms{W}}(u^{\sms{S}},\Gamma;\bsb{\theta} )
 &= 2\pi \!\! \int_0^\pi \Big\{ \Big[ 4\mu \Big( \frac{R'u^{\sms{S}}}{R\alpha} \Big)^2
  -\frac{1}{\mu} f_{\tau}^2
  + \frac{1}{\mu}(f_n+p)p + 2\kappa u^{\sms{S}} f_{\tau} \Big] \alpha\theta_n
  - 2f_{\tau}(u^{\sms{S}} )'\theta_{\tau} + 2u^{\sms{S}} f_n\theta_n ' \Big\} R\td t \hspace*{-2em} \label{eq:JWprime} \\
 U'(u^{\sms{S}},\Gamma;\bsb{\theta} )
 &= - \frac{2\pi}{F_0}  \int_{0}^{\pi} \Big\{ \Big[ \kappa u^{\sms{S}} \hat{f}_{\tau} - \frac{1}{\mu} f_{\tau}\hat{f}_{\tau} + \frac{1}{2\mu}(f_n+p)\hat{p} \Big] \alpha\theta_n
  - \hat{f}_{\tau}(u^{\sms{S}} )'\theta_{\tau} - u^{\sms{S}} \hat{p}\theta_n ' \Big\} R\td t \label{eq:Uprime} \\
  F_0'(\Gamma;\bsb{\theta} )
 &= -\frac{2\pi}{\mu} \int_{0}^{\pi} \hat{f}_{\tau}^2\theta_n R\alpha \,\td t .
 \retainlabel{eq:Fprime}
\end{align}
with $f_{\tau}:=\bsb{f}\cdot \bsb{\tau}$ and $f_n:=\bsb{f}\cdot \bsb{n}$. The shape derivative of the normalized drag force is then given by
\begin{equation}
    J_{\text{drag}}'(\Gamma;\bsb{\theta})
 = J_{\text{drag}}(\Gamma) \left(\frac{F_0'}{F_0} - \frac{V'}{3V} \right), \label{eq:JDprime}
\end{equation}
with $V'$ given by Proposition~\ref{prop:geom}.
\end{proposition}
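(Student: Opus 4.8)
The plan is to start from the ``raw'' material-derivative expressions \eqref{eq:JWprime1} and \eqref{eq:F0prime}, which still contain the solution material derivatives $\overstar{\bsb{f}}$ and $\overstar{U}$, and to eliminate these by the adjoint/weak-form technique. The natural vehicle is the domain dissipation form $a(\cdot,\cdot)$ of \eqref{eq:weakforward}: by Lemma~\ref{lemma1}, $J_{\sms{W}}(u^{\sms{S}},\Gamma)=a(\bsb{u},\bsb{u})$ and $F_0(\Gamma)=\intG{\hat{\bsb{f}},\bsb{e}_z}=a(\hat{\bsb{u}},\hat{\bsb{u}})$ are both self-adjoint (``compliance-type'') functionals, while the numerator $\intG{\hat{f}_{\tau},u^{\sms{S}}}$ of $U$ in \eqref{NNF:exp} equals the mutual dissipation $a(\hat{\bsb{u}},\bsb{u}_0)$ of the towing flow \eqref{eq:adj0} and the pure-slip ($U=0$) flow whose traction is $\bsb{f}_0[u^{\sms{S}}]$ in \eqref{eq:f0}. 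I would write each as a volume integral over $\Omega$ and differentiate it with the material-differentiation identity (\ref{dd:IVS}a), which produces a chain-rule term carrying $D[\overstar{\bsb{u}}]$ (through (\ref{dd:grad}a)) together with explicit terms in $\nabla\bsb{\theta}$ and $\nabla\cdot\bsb{\theta}$.

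The heart of the argument is the disappearance of the solution derivatives. For the self-adjoint functionals the $D[\overstar{\bsb{u}}]$ contribution collapses, by the weak form of the forward problem, to $2\intG{\bsb{f},\overstar{\bsb{u}}}$ (up to a pressure/incompressibility correction, since $\overstar{\bsb{u}}$ is not divergence-free by (\ref{dd:grad}b)); on $\Gamma$ the differentiated Dirichlet condition \eqref{eq:uD} gives $\overstar{\bsb{u}}=\overstar{U}\bsb{e}_z+u^{\sms{S}}\overstar{\bsb{\tau}}$ using $\overstar{u}{}^{\sms{S}}=0$, and the genuinely unknown piece $\overstar{U}\intG{\bsb{f},\bsb{e}_z}$ vanishes by the no-net-force condition \eqref{eq:NNF}, leaving only the geometric term $2\intG{\bsb{f},u^{\sms{S}}\overstar{\bsb{\tau}}}$, which is expressible through the Frenet relations \eqref{eq:frenet}. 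For the mutual form defining $U$, the two solution derivatives are paired against the opposite solution and removed by the reciprocity identity of Lemma~\ref{lemma1} together with the symmetry $a(\hat{\bsb{u}},\bsb{u}_0)=a(\bsb{u}_0,\hat{\bsb{u}})$. After this step every remaining integrand is built solely from the two flow solutions and from $\bsb{\theta}$, $\nabla\bsb{\theta}$.

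What remains is to localize the volume integrals to $\Gamma$ and then reduce them to the one-dimensional form \eqref{iG:1D}. The explicit $\nabla\bsb{\theta}$-terms combine into the divergence of a Stokes energy–momentum (Eshelby-type) tensor whose bulk divergence vanishes by the PDE \eqref{eq:forward}, so the integral collapses to $\Gamma$; equivalently one integrates by parts directly. On $\Gamma$ I would decompose $\nabla\bsb{u}$ in the moving frame $(\bsb{\tau},\bsb{n})$, use the boundary data \eqref{eq:uD} to turn tangential derivatives of $\bsb{u}$ into $(u^{\sms{S}})'$, $\kappa u^{\sms{S}}$ and $\kappa U$ via \eqref{eq:frenet}, and use the traction definition with incompressibility to identify the normal-stretch combination $f_n+p=2\mu\,\pd_n u_n$. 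Feeding these through \eqref{tn:axi} and \eqref{kappa:expr} should produce precisely the coefficients $4\mu (R'u^{\sms{S}}/R\alpha)^2$, $-f_{\tau}^2/\mu$, $(f_n+p)p/\mu$ and $2\kappa u^{\sms{S}}f_{\tau}$ of \eqref{eq:JWprime}, and the analogous coefficients of \eqref{eq:Uprime}. For $F_0$ the boundary velocity is a rigid translation $\bsb{e}_z$, for which the compliance shape gradient reduces to minus the squared tangential traction over $\mu$, giving \eqref{eq:Fprime} directly. The derivative \eqref{eq:Uprime} then also follows by differentiating the quotient $U=-\intG{\hat{f}_{\tau},u^{\sms{S}}}/F_0$ read off from \eqref{NNF:exp} with $F_0'$ in hand, and \eqref{eq:JDprime} is the elementary chain rule applied to $J_{\sms{drag}}=F_0/6\pi\mu r$ with $r\propto V^{1/3}$, using $V'$ from Proposition~\ref{prop:geom}.

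I expect the main obstacle to be the boundary bookkeeping of the final step: correctly resolving the full gradient $\nabla\bsb{u}$ on $\Gamma$ into tangential derivatives and traction components, tracking every contraction of $\nabla\bsb{u}$ with $\nabla\bsb{\theta}$, and carrying along the pressure/incompressibility correction (the source of the $(f_n+p)p$ term), so that the signs and the precise groupings emerge exactly as stated. The appearance of $\theta_n'$ signals an integration by parts in $t$ performed at this stage, which must be made compatible with the pole conditions \eqref{eq:thetacondition}.
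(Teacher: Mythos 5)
Your proposal is correct and follows essentially the same route as the paper's proof: starting from \eqref{eq:JWprime1}--\eqref{eq:F0prime}, eliminating the solution material derivatives via the weak formulations, the no-net-force condition and reciprocity/symmetry, collapsing the residual volume terms through the divergence-free Eshelby-type tensor (the paper's Lemma~\ref{lemma:Etensor}), reducing to $\Gamma$ with the curvilinear-coordinate expressions of $\nabla\bsb{u}$ in terms of tractions and $f_n+p$ (Lemma~\ref{lemma:grad}), handling $\overstar{\bsb{\tau}}$ via Frenet, and finishing \eqref{eq:JDprime} by the chain rule. The only organizational difference is that you differentiate the quadratic/mutual energy functionals $a(\bsb{u},\bsb{u})$, $a(\hat{\bsb{u}},\hat{\bsb{u}})$, $a(\hat{\bsb{u}},\bsb{u}_0)$ directly, whereas the paper differentiates the mixed weak formulation \eqref{eq:weakforward} itself and pairs it with suitably chosen test functions; the two computations are identical in substance.
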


Formulas~\eqref{eq:JWprime} and~\eqref{eq:Uprime} are valid for any slip velocity profile $u^{\sms{S}}$ that is convected by $\bsb{\theta}$. Moreover, (\ref{eq:JWprime}-d) are all insensitive to a perturbation of the (forward or adjoint) pressure field by a constant pressure difference $\Delta p$. For example, replacing $f_n$ and $p$ by $f_n-\Delta p$ and $p+\Delta p$ brings to \eqref{eq:JWprime} the additional term
\begin{equation}
-4\pi \Delta p \int_0^{\pi} \big[ Ru^{\sms{S}} \theta_n' + (R u ^{\sms{S}})'\theta_n \big] \td t = -4\pi\Delta p \int_0^{\pi} (Ru^{\sms{S}} \theta_n)' \,\td t= 0 \quad \text{using \eqref{eq:us0}}.
\end{equation}
Applying~\eqref{eq:JWprime}, \eqref{eq:Uprime} with $u^{\sms{S}}=z^{\sms{S}}$ to Lemma~\ref{dd:lambdamax} defines a computationally tractable evaluation method for the shape derivative of the swimming efficiency. We note however that the resulting formula for $E'(\Gamma;\bsb{\theta})$ appears to involve the tangential transformation velocity $\theta_{\tau}$, in apparent violation of the structure theorem for shape derivatives. To resolve this contradiction, we now exploit additional properties satisfied only by the optimal slip velocity $z^{\sms{S}}$. Indeed, the definition~\eqref{zS:def} of $z^{\sms{S}}$ and the boundary conditions of problems~\eqref{eq:forward} and~\eqref{eq:adj0} together imply that $(\tilde{\bsb{u}},\tilde{p})$ solve problem~\eqref{eq:forward} with $(u^{\sms{S}},U)=(z^{\sms{S}},0)$, so that $\bsb{f}[z^{\sms{S}}] = \tilde{\bsb{f}} + U(z^{\sms{S}},\Gamma)\hat{\bsb{f}}$. Hence, using again the boundary condition~(\ref{eq:optslip}b), the traction components of the forward solution are found to verify\begin{equation}
  f_{\tau}[z^{\sms{S}}] = \tilde{f}_{\tau} + U(z^{\sms{S}},\Gamma) \hat{f}_{\tau} = \big( 1+U(z^{\sms{S}},\Gamma) \big) \hat{f}_{\tau}, \qquad
  f_n[z^{\sms{S}}] = \tilde{f}_n-U(z^{\sms{S}},\Gamma)\hat{p}, \qquad p[z^{\sms{S}}] = \tilde{p}+U(z^{\sms{S}},\Gamma)\hat{p} \label{fs:fsh}.
\end{equation}
This results in the following final expression of the shape derivative of $E(\Gamma)$, whose form is now consistent with the structure theorem:
\begin{proposition}\label{prop:dE}
Let $z^{\sms{S}}$ be the optimal slip velocity~\eqref{zS:def} for given $\Gamma$, and consider a shape perturbation of $\Gamma$ with transformation velocity field $\bsb{\theta}$. The shape derivative of the optimal efficiency $E(\Gamma)$ is given by
\begin{align} \label{eq:ddE}
    E'(\Gamma;\bsb{\theta})
 &= -\frac{2\pi}{F_0(1\!+\! U)^2}
  \int_0^{\pi} \Big\{ 2z^{\sms{S}} \big( \tilde{f}_n + \hat{p} \big) \theta_n ' \\ & \hspace*{8em}
  + \Big[ 4\mu \Big( \frac{R'z^{\sms{S}}}{R\alpha} \Big)^2 + \frac{1}{\mu}\big( \tilde{f}_n + \tilde{p} \big)\,\big( \tilde{p}-\hat{p} \big)
  + \frac{1\!+\! U}{\mu} \hat{f}{}^2_{\tau} \Big] \alpha\theta_n \Big\} R\td t \notag
\end{align}
where $(\hat{\bsb{u}},\hat{\bsb{f}},\hat{p})$ and $(\tilde{\bsb{u}},\tilde{\bsb{f}},\tilde{p})$ respectively solve problems~\eqref{eq:adj0} and~\eqref{eq:optslip}, $z^{\sms{S}}=\tilde{\bsb{u}}\cdot\bsb{\tau}$ and $U=U(z^{\sms{S}},\Gamma)$ is given by~\eqref{NNF:exp}. In particular, $E'(\Gamma;\bsb{\theta})$ is a linear functional on $\theta_n$.
\end{proposition}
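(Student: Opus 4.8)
The plan is to obtain $E'(\Gamma;\bsb{\theta})$ by feeding the three material-derivative formulas of Proposition~\ref{prop:Jw}, taken at $u^{\sms{S}}=z^{\sms{S}}$, into the expression of Lemma~\ref{dd:lambdamax}, and then collapsing everything into the claimed form using the special traction identities~\eqref{fs:fsh} that hold only for the optimal slip. Since $J_{\sms{D}}=F_0U^2$ by~\eqref{eq:JD}, I would first write $J'_{\sms{D}} = F_0'U^2 + 2F_0 U U'$ and substitute~\eqref{eq:JWprime}, \eqref{eq:Uprime}, \eqref{eq:Fprime}. I would also record the two scalar ingredients needed to put Lemma~\ref{dd:lambdamax} in closed form: from Proposition~\ref{zS:opt}, $E(\Gamma) = -U/(1+U)$, and, combining $f_{\tau}[z^{\sms{S}}]=(1+U)\hat{f}_{\tau}$ from~\eqref{fs:fsh} with~\eqref{NNF:exp}, the denominator value $J_{\sms{W}}(z^{\sms{S}},\Gamma) = \intG{f_{\tau},z^{\sms{S}}} = -F_0 U(1+U)$. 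From here the argument splits according to the three types of terms $\theta_{\tau}$, $\theta_n'$, $\alpha\theta_n$ appearing in the integrands.

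The first checkpoint is the $\theta_{\tau}$ contributions, which must vanish for consistency with the structure theorem. Only~\eqref{eq:JWprime} and~\eqref{eq:Uprime} carry $\theta_{\tau}$, through $-2f_{\tau}(z^{\sms{S}})'$ and $+\hat{f}_{\tau}(z^{\sms{S}})'$ respectively, while $F_0'$ has none. Using $f_{\tau}[z^{\sms{S}}]=(1+U)\hat{f}_{\tau}$ in the $-E(\Gamma)J'_{\sms{W}}$ piece and collecting it against the $2F_0 U U'$ piece of $J'_{\sms{D}}$, I expect the two $\theta_{\tau}$-integrals to be proportional with opposite signs and to cancel identically, confirming that $E'(\Gamma;\bsb{\theta})$ reduces to a functional of $\theta_n$ and $\theta_n'$ alone.

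The remaining work is the bookkeeping for the $\theta_n'$ and $\alpha\theta_n$ coefficients, which I regard as the main (though routine) obstacle. For $\theta_n'$, substituting $f_n=\tilde{f}_n - U\hat{p}$ from~\eqref{fs:fsh} into the $2z^{\sms{S}}f_n$ term of $J'_{\sms{W}}$ and combining with the $-z^{\sms{S}}\hat{p}$ term of $U'$, I expect the $U$-dependent pieces to recombine into the single factor $z^{\sms{S}}(\tilde{f}_n+\hat{p})$, matching the claimed $\theta_n'$-coefficient once divided by $J_{\sms{W}}$. For the $\alpha\theta_n$ coefficient, the key simplifications I would exploit are the identity $f_n+p = \tilde{f}_n+\tilde{p}$ (the $U\hat{p}$ cancelling between $f_n$ and $p$ in~\eqref{fs:fsh}), together with $f_{\tau}^2=(1+U)^2\hat{f}_{\tau}^2$ and $p=\tilde{p}+U\hat{p}$. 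Tracking the coefficients term by term, I anticipate that the curvature terms $\kappa z^{\sms{S}}\hat{f}_{\tau}$ arising from $J'_{\sms{D}}$ and from $-E(\Gamma)J'_{\sms{W}}$ cancel; that the $\hat{f}_{\tau}^2$ coefficients collapse, from the pieces $U^2+2U$ and $-U(1+U)$, to a net $U/\mu$; and that the $(\tilde{f}_n+\tilde{p})\hat{p}$ and $(\tilde{f}_n+\tilde{p})\tilde{p}$ contributions merge into $(\tilde{f}_n+\tilde{p})(\tilde{p}-\hat{p})$. Factoring $U/(1+U)$ out of the resulting bracket and dividing by $J_{\sms{W}}=-F_0 U(1+U)$ then produces the prefactor $-2\pi/[F_0(1+U)^2]$ and the three-term bracket of~\eqref{eq:ddE}. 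The final expression is manifestly linear in $\theta_n$ and $\theta_n'$, as required by the structure theorem.
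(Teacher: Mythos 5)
Your proposal is correct and follows essentially the same route as the paper: evaluate the three sensitivity formulas of Proposition~\ref{prop:Jw} at $u^{\sms{S}}=z^{\sms{S}}$, use the optimal-slip traction identities~\eqref{fs:fsh}, and substitute into Lemma~\ref{dd:lambdamax} together with $E(\Gamma)=-U/(1+U)$ and $J_{\sms{W}}(z^{\sms{S}},\Gamma)=-F_0U(1+U)$. All of your anticipated cancellations check out — the $\theta_{\tau}$ terms vanish, the curvature terms cancel, the $\hat f_{\tau}^2$ pieces $U^2+2U$ and $-U(1+U)$ net to $U$, and the pressure terms merge into $(\tilde f_n+\tilde p)(\tilde p-\hat p)$ — so your write-up is in fact a more detailed version of the paper's (very terse) proof.
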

\begin{proof}
We evaluate formulas~(\ref{eq:JWprime}-c) for the optimal slip velocity $z^{\sms{S}}$, which allows to express $f_{\tau}$, $f_n$  and $p$ in terms of $\tilde{f}_n,\tilde{p}$ and $\hat{f},\hat{p}$ using~\ref{fs:fsh}.
We then use the result to compute $E'(\Gamma;\bsb{\theta})$ given by Lemma~\eqref{dd:lambdamax} and recall that $J_{\sms{W}}(z^{\sms{S}},\Gamma)=E(\Gamma)J_{\sms{D}}(z^{\sms{S}},\Gamma)=-F_0U(1\!+\!U)$ (see~\eqref{eq:JD} and~\eqref{EG:expr}. This yields the claimed expression of $E'(\Gamma;\bsb{\theta})$.
\end{proof}

\subsection{Proof of Proposition~\ref{prop:Jw}}
\label{prop:Jw:proof}

This proof is divided into five main steps.

\proofstep{1. Forward and adjoint problems in weak form}
The results of Proposition~\ref{prop:Jw} rely on identities found by recasting the forward problem~\eqref{eq:forward}-\eqref{eq:uD} in mixed weak form (e.g. \cite{brez:fort:91}, Chap. 6): find $(\bsb{u}, p, \bsb{f})\in \bsb{\mathcal{V}} \times  \mathcal{P} \times \bsb{\mathcal{F}}$ such that
\begin{equation}
\label{eq:weakforward}
  a(\bsb{u}, \bsb{v}) - b(\bsb{v}, p) - b(\bsb{u}, q) - \intG{ \bsb{f}, \bsb{v} }
  + \intG{ \bsb{g}, u^{\sms{S}} \bsb{\tau}+U\bsb{e}_{z} } - \intG{ \bsb{g}, \bsb{u} } = 0 \qquad
  \forall (\bsb{v}, q,\bsb{g}) \in \bsb{\mathcal{V}}\times \mathcal{P}\times \mathcal{F}
\end{equation}
where the bilinear forms $a$ and $b$ are defined by
\begin{equation}
a(\bsb{u}, \bsb{v}) = \int_{\Omega} 2\mu ( \bsb{D}[\bsb{u}]\!:\!\bsb{D}[\bsb{v}]) \, \td V,  \qquad
b(\bsb{v}, q) = \int_{\Omega} q\, (\nabla \cdot \bsb{v}) \, \td V,
\end{equation}
The weak problem~\eqref{eq:weakforward} is well-posed if $\bsb{\mathcal{F}}=H^{-1/2}(\Gamma;\mathbb{R}^3)$, $\mathcal{P}=L^2(\Omega)$ and $\bsb{\mathcal{V}}$ is a weighted version of $H^1(\Omega;\mathbb{R}^3)$. Supplementing problem~\eqref{eq:weakforward} with the no-net-force condition~\eqref{eq:NNF} determines $U$ given $u^{\sms{S}}$. The unknown $\bsb{f}$, acting as the Lagrange multiplier associated with the Dirichlet BC, is in fact the force density on $\Gamma$ given by $\bsb{f}=\bsb{\sigma}[\bsb{u},p] \cdot \bsb{n}$, where $\bsb{\sigma}[\bsb{u},p] = -p\bsb{I}+2\mu\bsb{D}[\bsb{u}]$ is the stress tensor. Similarly, the adjoint problem~\eqref{eq:adj0} in weak form is: find $(\hat{\bsb{u}},\hat{p},\hat{\bsb{f}})\in \bsb{\mathcal{V}}\times \mathcal{P}\times\bsb{\mathcal{F}}$ such that
\begin{equation}
\label{eq:weakadjoint}
  a(\hat{\bsb{u}}, \bsb{v}) - b(\bsb{v}, \hat{p}) - b(\hat{\bsb{u}}, q) - \intG{ \hat{\bsb{f}}, \bsb{v} }
  + \intG{ \bsb{g}, \bsb{e}_{z} } - \intG{ \bsb{g}, \bsb{u} } = 0 \qquad
  \forall (\bsb{v}, q,\bsb{g}) \in \bsb{\mathcal{V}}\times \mathcal{P}\times \mathcal{F}
\end{equation}

\proofstep{2. Weak formulation for material derivatives of the forward solution.} The governing weak formulation for the shape derivative $(\overstar{\bsb{u}},\overstar{\bsb{f}}, \overstar{p},\overstar{U})$ of the solution $(\bsb{u},\bsb{f},p, U)$ of the forward problem \eqref{eq:weakforward}-\eqref{eq:NNF} is
\begin{align}
\lefteqn{
  a(\overstar{\bsb{u}}, \bsb{v}) - b(\bsb{v}, \overstar{p}) - b(\overstar{\bsb{u}}, q) - \intG{ \overstar{\bsb{f}}, \bsb{v} }
 + \intG{ u^{\sms{S}}\overstar{\bsb{\tau}} , \bsb{g} } + \intG{ \bsb{e}_z , \bsb{g} }\,\overstar{U}
- \intG{ \overstar{\bsb{u}} , \bsb{g} } } & \\
 &\qquad=  - \intO{ \bsb{E}\big( (\bsb{u},p), (\bsb{v},q)\big) , \nabla \tv \bsb{\theta}}  + \intG{ \bsb{f}, \bsb{v} \,\divs \bsb{\theta} } \qquad
  \forall (\bsb{v}, q,\bsb{g}) \in \bsb{\mathcal{V}}\times \mathcal{P}\times \mathcal{F}, \label{eq:weakderivative}
\end{align}
with the symmetric in $\big( (\bsb{u},p), (\bsb{v},q)\big)$ tensor-valued function $\bsb{E}$ defined by
\begin{equation}
\bsb{E}\big( (\bsb{u},p), (\bsb{v},q)\big) = \big( 2\mu \bsb{D}[\bsb{u}]\!:\!\bsb{D}[\bsb{v}] - p (\nabla\cdot\bsb{v}) - q (\nabla\cdot\bsb{u}) \big)\bsb{I} - \bsb{\sigma}[\bsb{u},p]\cdot \nabla \bsb{v} - \bsb{\sigma}[\bsb{v},q]\cdot \nabla \bsb{u}. \label{eq:tensorE}
\end{equation}
The value of $\overstar{U}$ is determined by the material derivative of the no-net-force condition~\eqref{eq:NNF}, which reads
\begin{equation}
  \intG{ \overstar{\bsb{f}}, \bsb{e}_{z} } +   \intG{ \overstar{\bsb{f}} \, \divs \bsb{\theta}, \bsb{e}_{z} } = 0 \label{dd:NNF}
\end{equation}

The weak formulation~\eqref{eq:weakderivative} is obtained by applying the material differentiation identities \eqref{dd:IVS}, with the aid of formulas~\eqref{dd:grad}, to the variational equation~\eqref{eq:weakforward}, assuming the test functions in \eqref{eq:weakforward} to verify $\overstar{\bsb{v}}=\bsb{0}$, $\overstar{\bsb{g}}=\bsb{0}$ and $\overstar{q}=0$, i.e., to be convected under the shape perturbation (which the absence of boundary constraints in $\bsb{\mathcal{V}}$ allows). Finally, the Dirichlet BC \eqref{eq:uD} is used to set the right-hand side of (\ref{eq:weakderivative}b) to zero.

\proofstep{3. Material derivatives of energy functionals and drag force.} We subtract equation~\eqref{eq:weakderivative} with $(\bsb{v}, q, \bsb{g}) = (\bsb{u}, p, \bsb{f})$ from equation~\eqref{eq:weakforward} with $(\bsb{v}, q, \bsb{g}) = (\overstar{\bsb{u}}, \overstar{p}, \overstar{\bsb{f}})$ and use the no-net-force condition~\eqref{eq:NNF}, to obtain
\begin{equation}
  \intG{ u^{\sms{S}}\overstar{\bsb{\tau}} , \bsb{f} }
   - \intG{\overstar{\bsb{f}}, u^{\sms{S}}\bsb{\tau}} - \intG{\overstar{\bsb{f}}, \bsb{e}_z}\,U
   = - \intO{\bsb{E}\big((\bsb{u}, p),(\bsb{u}, p) \big), \nabla\tv \bsb{\theta}}
 + \intG{ \bsb{f} , (u^{\sms{S}}\bsb{\tau} + U\bsb{e}_z)\,\divs \bsb{\theta} }. \label{eq:diffint}
\end{equation}
We then use~\eqref{eq:diffint} in \eqref{eq:JWprime1}, which allows to eliminate the contribution of $\overstar{\bsb{f}}$ and yields
\begin{equation}
  J_{\sms{W}}'(\Gamma;\bsb{\theta})
 = 2\intG{ \bsb{f} , u^{\sms{S}} \overstar{\bsb{\tau}} }
 + \intO{\bsb{E}\big((\bsb{u}, p),(\bsb{u}, p) \big), \nabla\tv \bsb{\theta}}. \label{aux:01}
\end{equation}
Equality \eqref{eq:diffint} is also valid for the case where $\bsb{u}^{\sms{D}}=\bsb{e}_z$, i.e., $U=1, u^{\sms{S}}=0$ corresponding to the adjoint problem, since it does not rely on the no-net-force condition. Using this version in~\eqref{eq:F0prime} readily yields
\begin{equation}
  F_0'(\Gamma; \bsb{\theta}) = \intO{ \bsb{E}\big((\hat{\bsb{u}}, \hat{p}),(\hat{\bsb{u}}, \hat{p}) \big), \nabla\tv \bsb{\theta} }.
\label{aux:02}
\end{equation}
Lastly (and similarly), setting $(\bsb{v},q,\bsb{g}) = (\overstar{\bsb{u}}, \overstar{p}, \overstar{\bsb{f}})$ in the adjoint problem~\eqref{eq:weakadjoint} and $(\bsb{v},q,\bsb{g}) = (\hat{\bsb{u}}, \hat{p}, \hat{\bsb{f}})$ in (\ref{eq:weakderivative}a,b) of the derivative problem, then evaluating the combination (\ref{eq:weakderivative}a)+(\ref{eq:weakderivative}b)-\eqref{eq:weakadjoint}, provides
\begin{equation}
F_0U'(\Gamma;\bsb{\theta}) + \intG{\hat{\bsb{f}}, u^{\sms{S}} \overstar{\bsb{\tau}} }
= - \intO{ \bsb{E}\big((\bsb{u}, p),(\hat{\bsb{u}}, \hat{p}) \big), \nabla\tv \bsb{\theta}  }. \label{aux:03}
\end{equation}
The functional derivatives given by~\eqref{aux:01}-\eqref{aux:03} are thus free of any solution material derivatives.

\proofstep{4. Material derivatives of functionals: boundary-only form} The domain integrals involving $\bsb{E}$ are, in all three cases~\eqref{aux:01} to~\eqref{aux:03}, recast as follows in terms of only boundary integrals:
\begin{equation}
\begin{aligned}
  \text{(a)} &\qquad & J_{\sms{W}}' (\Gamma;\bsb{\theta})
 &= \int_{\Gamma} \big[ 2\mu \bsb{D}[\bsb{u}]\!:\!\bsb{D}[\bsb{u}]\theta_n + 2\bsb{f}\cdot( \overstar{\bsb{u}}{}^{\sms{S}} - \nabla\bsb{u}\cdot\bsb{\theta}) \big] \td S, \\
  \text{(b)} &\qquad & F_0' (\Gamma;\bsb{\theta})
 &= \int_{\Gamma} \big[ 2\mu \bsb{D}[\hat{\bsb{u}}]\!:\!\bsb{D}[\hat{\bsb{u}}]\theta_n - 2\hat{\bsb{f}}\cdot \nabla\hat{\bsb{u}}\cdot\bsb{\theta} \big] \, \td S, \\
  \text{(c)} &\qquad & F_0U' (\Gamma;\bsb{\theta})
 &= -\int_{\Gamma} \big[ 2\mu \bsb{D}[\bsb{u}]\!:\!\bsb{D}[\hat{\bsb{u}}]\theta_n - \bsb{f}\cdot \nabla\hat{\bsb{u}}\cdot\bsb{\theta}
 + \hat{\bsb{f}}\cdot ( \overstar{\bsb{u}}{}^{\sms{S}} - \nabla\bsb{u}\cdot\bsb{\theta}) \big] \td S.
\end{aligned}
\label{eq:derividentities}
\end{equation}
The proof of formulas~(\ref{eq:derividentities}a-c) rests on the following identity, established in Appendix~\ref{apd:proofs}:
\begin{lemma}\label{lemma:Etensor}
Let $(\bsb{u},p)$ and $(\bsb{v},q)$ both satisfy~\eqref{eq:forward}.
Then, for any vector field $ \bsb{\theta}\in C^{1,\infty}_{0}(\Omega)$, we have:
\begin{equation}
\intO{ \bsb{E} \big( (\bsb{u},p), (\hat{\bsb{u}},\hat{p})\big) , \nabla \tv \bsb{\theta} }
=
\int_{\Gamma} \bsb{n}\cdot \bsb{E}\big( (\bsb{u},p), (\hat{\bsb{u}},\hat{p})\big) \cdot \bsb{\theta} \,\td S.
\end{equation}
\end{lemma}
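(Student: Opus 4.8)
The plan is to recognize this identity as a pure consequence of the divergence theorem applied to the tensor field $\bsb{E}$, the crux being a pointwise \emph{null-divergence} property of $\bsb{E}$ whenever both of its arguments solve the Stokes system~\eqref{eq:forward}. Writing the volume pairing in index form as $\intO{\bsb{E},\nabla\tv\bsb{\theta}} = \int_{\Omega} E_{ij}\partial_i\theta_j\,\td V$ (the convention that makes the boundary term come out as $\bsb{n}\cdot\bsb{E}\cdot\bsb{\theta}$) and integrating by parts, I would split it into a boundary contribution and a bulk remainder,
\begin{equation}
\int_{\Omega} E_{ij}\partial_i\theta_j \,\td V = \int_{\pd\Omega} n_i E_{ij}\theta_j \,\td S - \int_{\Omega} (\partial_i E_{ij})\theta_j \,\td V .
\end{equation}
Since $\bsb{\theta}\in C^{1,\infty}_0(\Omega)$ is compactly supported (in particular it vanishes near $\pd\Omega_{\sms{A}}$, hence in the far field), the only surviving boundary is $\Gamma$, on which $n_i E_{ij}\theta_j = \bsb{n}\cdot\bsb{E}\cdot\bsb{\theta}$; this already produces the right-hand side. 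It therefore remains to show that the bulk remainder vanishes, i.e. that $\partial_i E_{ij}=0$ pointwise in $\Omega$.

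For the divergence computation I would use the definition~\eqref{eq:tensorE} of $\bsb{E}$ together with the two structural facts satisfied by Stokes solutions: incompressibility $\nabla\cdot\bsb{u}=\nabla\cdot\hat{\bsb{u}}=\bsb{0}$, and the momentum balance $\nabla\cdot\bsb{\sigma}[\bsb{u},p]=\nabla\cdot\bsb{\sigma}[\hat{\bsb{u}},\hat{p}]=\bsb{0}$ (equivalent to~\eqref{eq:forward} once incompressibility is used). Under incompressibility the isotropic part of $\bsb{E}$ reduces to $\Phi\,\bsb{I}$ with $\Phi = 2\mu\bsb{D}[\bsb{u}]\!:\!\bsb{D}[\hat{\bsb{u}}]$, contributing $\partial_j\Phi$ to $\partial_i E_{ij}$. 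In the two remaining terms $\bsb{\sigma}[\bsb{u},p]\cdot\nabla\hat{\bsb{u}}$ and $\bsb{\sigma}[\hat{\bsb{u}},\hat{p}]\cdot\nabla\bsb{u}$, the piece where the derivative hits the stress drops out by the momentum balance, while the piece where the derivative hits the velocity gradient rearranges: its pressure part dies by incompressibility, and its viscous part, using the symmetry of $\bsb{D}$ and of second derivatives, collapses to $2\mu\bsb{D}[\bsb{u}]\!:\!\partial_j\bsb{D}[\hat{\bsb{u}}]$ (resp. $2\mu\bsb{D}[\hat{\bsb{u}}]\!:\!\partial_j\bsb{D}[\bsb{u}]$). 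Their sum is exactly $\partial_j\Phi$, so these contributions cancel the isotropic one and $\partial_i E_{ij}=0$.

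The main obstacle is the index bookkeeping in this last step: one must carefully track which terms survive after applying the momentum balance and incompressibility, and then recognize that the two viscous remainders recombine into a perfect gradient of the energy density $\Phi$. The symmetric dependence of $\bsb{E}$ on its two argument pairs is precisely what guarantees this recombination, each field supplying one half of $\partial_j\Phi$. Once $\nabla\cdot\bsb{E}=\bsb{0}$ is established the statement follows at once from the displayed integration by parts, the compact support of $\bsb{\theta}$ removing every boundary term except the one on $\Gamma$. I would also note in passing that the same computation applies verbatim to the single-field pairings used in~\eqref{aux:01}--\eqref{aux:02} and to the mixed pairing used in~\eqref{aux:03}, so this one lemma underlies all three boundary-only reductions in~\eqref{eq:derividentities}.
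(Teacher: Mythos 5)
Your proof is correct, and its skeleton --- establish that $\bsb{E}$ is divergence-free when both argument pairs solve the Stokes system, then apply the divergence theorem, with the compact support of $\bsb{\theta}$ removing every boundary contribution except the one on $\Gamma$ --- is the same as the paper's. The difference lies in how the null divergence of the \emph{mixed} tensor $\bsb{E}\big((\bsb{u},p),(\hat{\bsb{u}},\hat{p})\big)$ is obtained. The paper verifies $\nabla\cdot\bsb{E}\big((\bsb{u},p),(\bsb{u},p)\big)=\bsb{0}$ only in the diagonal case (and even there leaves the component computation to the reader), and then transfers the resulting identity $\bsb{E}:\nabla\tv\bsb{\theta}=\nabla\cdot\big(\bsb{E}\cdot\bsb{\theta}\big)$ to the off-diagonal pair by exploiting the symmetric bilinear dependence of $\bsb{E}$ on its two arguments together with the polarization identity. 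You instead attack the mixed divergence head-on: the momentum balance $\nabla\cdot\bsb{\sigma}=\bsb{0}$ kills the terms where the derivative hits a stress, incompressibility kills the pressure parts, and the two viscous remainders $2\mu\bsb{D}[\bsb{u}]\!:\!\partial_j\bsb{D}[\hat{\bsb{u}}]$ and $2\mu\bsb{D}[\hat{\bsb{u}}]\!:\!\partial_j\bsb{D}[\bsb{u}]$ recombine into the gradient of $2\mu\bsb{D}[\bsb{u}]\!:\!\bsb{D}[\hat{\bsb{u}}]$, cancelling the divergence of the isotropic part --- precisely the cancellation the polarization trick is designed to avoid writing out. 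What each buys: your route is self-contained and actually supplies the index computation that the paper asserts as ``straightforward,'' directly for the case needed; the paper's route is slicker bookkeeping, since once the diagonal (quadratic) identity is accepted, bilinearity and symmetry yield the mixed case with no further index work. Your closing observation that the same computation covers all three pairings in~\eqref{eq:derividentities} matches exactly how the lemma is deployed in the paper.
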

Formulas~(\ref{eq:derividentities}a-c) are then found by applying Lemma~\ref{lemma:Etensor} to the right-hand sides of~\eqref{aux:01}, \eqref{aux:02} and~\eqref{aux:03} and using the definition~\eqref{eq:tensorE} of $\bsb{E}$, with $(\bsb{v},q)=(\bsb{u},p)$ for~(\ref{eq:derividentities}a), $(\bsb{u},p)=(\bsb{v},q)=(\hat{\bsb{u}},\hat{p})$ for~(\ref{eq:derividentities}b) and $(\bsb{v},q)=(\hat{\bsb{u}},\hat{p})$ for~(\ref{eq:derividentities}c).

\proofstep{5. Express velocity normal derivatives using traction vectors.} The shape sensitivity formulas \eqref{eq:derividentities} remain somewhat inconvenient as they involve complete velocity gradients on $\Gamma$. This can be remedied by using the decomposition $\nabla\bsb{u}=\nabla_{\sms{S}}\bsb{u}+\pd_{n}\bsb{u}\otimes\bsb{n}$ of the velocity gradient (where $\nabla_{\sms{S}}\bsb{u}$ and $\pd_{n}\bsb{u}$ respectively denote the tangential gradient and the normal derivative of $\bsb{u}$) and expressing $\pd_n \bsb{u}$ in terms of $\bsb{f}$. In view of the specific form $\bsb{u}^{\sms{D}} = U\bsb{e}_z+u^{\sms{S}}\bsb{\tau}$ of the Dirichlet data on $\Gamma$, this step is carried out explicitly using curvilinear coordinates, and the following expressions are found by means of straightforward algebra (see the proof given in Appendix~\ref{apd:proofprop}) for the gradient and the strain rate tensor of the forward and adjoint solutions:
\begin{lemma}\label{lemma:grad}
Let $(\bsb{u}, p, \bsb{f})$ be the solution to the forward problem. On $\Gamma$,
\begin{equation}
\begin{aligned}
  \nabla\bsb{u}
 &= \frac{1}{2\mu}(f_n+p) \big( \bsb{n} \otimes \bsb{n} - \bsb{\tau} \otimes \bsb{\tau} \big)
  + \frac{R'u^{\sms{S}}}{R\alpha} \big( \bsb{\nu}\otimes\bsb{\nu} - \bsb{\tau} \otimes \bsb{\tau} \big)
  + \kappa u^{\sms{S}}(\bsb{n}\otimes\bsb{\tau}-\bsb{\tau} \otimes \bsb{n}) +\frac{1}{\mu}f_{\tau} \bsb{\tau} \otimes \bsb{n}, \\
  2\bsb{D}[\bsb{u}]
 &= \frac{1}{\mu}(f_n+p) \big( \bsb{n} \otimes \bsb{n} - \bsb{\tau} \otimes \bsb{\tau} \big)
  + \frac{2R'u^{\sms{S}}}{R\alpha} \big( \bsb{\nu}\otimes\bsb{\nu} - \bsb{\tau} \otimes \bsb{\tau} \big)
  + \frac{1}{\mu}f_{\tau} \big(\bsb{n}\otimes\bsb{\tau} +\bsb{\tau} \otimes \bsb{n}\big)
\end{aligned}
\label{eq:graduand2Du1}
\end{equation}
Similarly, for the solution $(\hat{\bsb{u}}, \hat{p}, \hat{\bsb{f}})$ of the adjoint problem, we have
\begin{equation}
\nabla \hat{\bsb{u}} = \frac{1}{\mu}\hat{f}_{\tau} \bsb{\tau}\otimes \bsb{n}, \quad 2\mu\bsb{D}[\hat{\bsb{u}}] = \hat{f}_{\tau} (\bsb{n}\otimes \bsb{\tau}+\bsb{\tau}\otimes \bsb{n}).
\end{equation}
\end{lemma}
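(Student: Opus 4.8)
The plan is to work at a point of $\Gamma$ in the local orthonormal frame $(\bsb{\tau},\bsb{\nu},\bsb{n})$, where $\bsb{\nu}:=\bsb{e}_{\phi}=-\sin\phi\,\bsb{e}_x+\cos\phi\,\bsb{e}_y$ is the azimuthal unit vector completing the meridional pair $(\bsb{\tau},\bsb{n})$ into a basis of $\mathbb{R}^3$, and to split the boundary velocity gradient as $\nabla\bsb{u}=\nabla_{\sms{S}}\bsb{u}+\pd_n\bsb{u}\otimes\bsb{n}$ (with the convention $\nabla\bsb{u}\cdot\bsb{w}=\pd_{\bsb{w}}\bsb{u}$, so $\nabla\bsb{u}=\pd_{\tau}\bsb{u}\otimes\bsb{\tau}+\pd_{\nu}\bsb{u}\otimes\bsb{\nu}+\pd_n\bsb{u}\otimes\bsb{n}$). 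The tangential part $\nabla_{\sms{S}}\bsb{u}=\pd_{\tau}\bsb{u}\otimes\bsb{\tau}+\pd_{\nu}\bsb{u}\otimes\bsb{\nu}$ is fixed purely by the Dirichlet datum $\bsb{u}=U\bsb{e}_z+u^{\sms{S}}\bsb{\tau}$, so the real work is to recover the three scalar components of the normal derivative $\pd_n\bsb{u}=a\bsb{\tau}+b\bsb{\nu}+c\bsb{n}$.

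First I would compute $\nabla_{\sms{S}}\bsb{u}$ explicitly. Writing $\pd_{\tau}=\alpha^{-1}\pd_t$ and invoking the Frenet relations \eqref{eq:frenet} gives $\pd_{\tau}\bsb{u}=\alpha^{-1}(u^{\sms{S}})'\bsb{\tau}+\kappa u^{\sms{S}}\bsb{n}$, while $\pd_{\nu}=R^{-1}\pd_{\phi}$ together with $\pd_{\phi}\bsb{\tau}=(R'/\alpha)\bsb{\nu}$ (itself a consequence of $\pd_{\phi}\bsb{e}_r=\bsb{e}_{\phi}$) gives $\pd_{\nu}\bsb{u}=(R'u^{\sms{S}}/R\alpha)\bsb{\nu}$. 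This already produces the $\bsb{\tau}\otimes\bsb{\tau}$, $\bsb{n}\otimes\bsb{\tau}$ and $\bsb{\nu}\otimes\bsb{\nu}$ dyads of the stated formula.

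Next I would pin down $a,b,c$ from three independent scalar relations. Incompressibility $\operatorname{tr}\nabla\bsb{u}=0$ fixes $c=\bsb{n}\cdot\pd_n\bsb{u}=-(Ru^{\sms{S}})'/(R\alpha)$. The traction relation $\bsb{f}=(-p\bsb{I}+2\mu\bsb{D}[\bsb{u}])\cdot\bsb{n}$ supplies the remaining two: projecting on $\bsb{n}$ gives $f_n=-p+2\mu c$, i.e. $c=(f_n+p)/2\mu$, and equating this with the incompressibility value of $c$ is exactly what lets me trade the tangential-derivative term $(u^{\sms{S}})'/\alpha$ for the trace-free combination $-(f_n+p)/2\mu-R'u^{\sms{S}}/R\alpha$ sitting in the $\bsb{\tau}\otimes\bsb{\tau}$ slot; projecting on $\bsb{\tau}$ and symmetrising gives $f_{\tau}=\mu\big(\bsb{\tau}\cdot\pd_n\bsb{u}+\bsb{n}\cdot\pd_{\tau}\bsb{u}\big)=\mu(a+\kappa u^{\sms{S}})$, hence $a=f_{\tau}/\mu-\kappa u^{\sms{S}}$; projecting on $\bsb{\nu}$, the axisymmetry (no swirl) forces $f_{\nu}=0$ while $\bsb{n}\cdot\pd_{\nu}\bsb{u}=0$, so $b=0$. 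Assembling $\nabla_{\sms{S}}\bsb{u}+\pd_n\bsb{u}\otimes\bsb{n}$ yields the stated $\nabla\bsb{u}$, and forming $\nabla\bsb{u}+\nabla\bsb{u}^{\sms{T}}$ (whereby the antisymmetric $\kappa u^{\sms{S}}$ dyad drops out) yields $2\bsb{D}[\bsb{u}]$. The adjoint case is the specialisation $u^{\sms{S}}=0$, $U=1$: then $\bsb{u}=\bsb{e}_z$ is constant on $\Gamma$, so $\nabla_{\sms{S}}\hat{\bsb{u}}=0$ and $c=0$ (equivalently $\hat{f}_n+\hat{p}=0$), leaving only $\nabla\hat{\bsb{u}}=\mu^{-1}\hat{f}_{\tau}\,\bsb{\tau}\otimes\bsb{n}$ and its symmetrisation.

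The genuinely delicate points, rather than the algebra itself, are two. The first is the azimuthal contribution: even though $\bsb{u}$ has no azimuthal component, the meridional tangent $\bsb{\tau}$ rotates with $\phi$, so $\pd_{\nu}\bsb{u}\neq\bsb{0}$ and the $\bsb{\nu}\otimes\bsb{\nu}$ term is real, while the vanishing of $b$ must be justified through the no-swirl property $f_{\nu}=0$. The second is the symmetrisation bookkeeping together with the deliberate use of incompressibility: the shear traction $f_{\tau}$ couples the unknown $a=\bsb{\tau}\cdot\pd_n\bsb{u}$ to the known curvature term $\bsb{n}\cdot\pd_{\tau}\bsb{u}=\kappa u^{\sms{S}}$, and eliminating $(u^{\sms{S}})'/\alpha$ in favour of $(f_n+p)$ is precisely what casts every surviving dyad in trace-free form, making the final expression manifestly divergence-free and matching the stated formulas exactly.
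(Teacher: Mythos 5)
Your proposal is correct and takes essentially the same route as the paper: the paper's Appendix C likewise works in the local frame $(\bsb{\tau},\bsb{\nu},\bsb{n})$, computes the tangential part of the gradient from the Dirichlet datum, uses incompressibility to eliminate the normal--normal derivative, and inverts the traction relation $\bsb{f}=-p\bsb{n}+2\mu\bsb{D}[\bsb{u}]\cdot\bsb{n}$ to express $\pd_n u_{\tau}$ and $(Ru^{\sms{S}})'/(R\alpha)$ in terms of $f_{\tau}$, $f_n$, $p$ --- precisely your determination of $a$ and $c$, with $b=0$ encoded in the paper by the no-swirl form \eqref{eq:vecv} of the ansatz. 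The only cosmetic difference is that the paper first derives the gradient of a generic axisymmetric field in tubular-neighborhood coordinates $(t,h)$ and then specializes, whereas you parametrize $\pd_n\bsb{u}$ by three unknowns directly on $\Gamma$ and solve for them.
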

In addition, as shown in Appendix~\ref{apd:proofs}, the material derivative $\overstar{\bsb{\tau}}$ of the unit tangent to generating arcs is given by
\begin{equation}
  \overstar{\bsb{\tau}} = \Big(\frac{1}{\alpha}\theta_n' + \kappa\theta_{\tau} \Big)\bsb{n}. \label{eq:taustar}
\end{equation}

The shape sensitivity formulas~(\ref{eq:JWprime}-c) are finally obtained by deriving explicit expressions of the right-hand sides of \eqref{eq:derividentities}. First, to establish~\eqref{eq:JWprime} for $J_{\sms{W}}'(\Gamma;\bsb{\theta})$, we derive the expressions
\begin{equation}
\begin{aligned}
  \bsb{f}\cdot\overstar{\bsb{u}}{}^{\sms{S}}
  &= u^{\sms{S}} f_n \big( \kappa \theta_{\tau} + \frac{1}{\alpha}\theta_n' \big), \\
  \bsb{f}\cdot\nabla\bsb{u}\cdot\bsb{\theta}
  &= \Big( \frac{1}{\mu} f_{\tau}^2 + \frac{1}{2\mu}(f_n+p)f_n - \kappa u^{\sms{S}} f_{\tau} \Big) \theta_n
  + \Big( \frac{1}{\alpha}(u^{\sms{S}} )' f_{\tau} + \kappa u^{\sms{S}} f_n \Big) \theta_{\tau} \\
  2\mu \bsb{D}[\bsb{u}] \!:\! \bsb{D}[\bsb{u}] \theta_n
  &= \Big[ \frac{1}{\mu} f_{\tau}^2 + 4\mu \Big( \frac{R'u^{\sms{S}}}{R\alpha} \Big)^2
  + \frac{1}{\mu}(f_n+p)^2 \Big] \theta_n
\end{aligned}
\end{equation}
with the help of~\eqref{eq:taustar} and~\eqref{eq:graduand2Du1}. After rearrangement, we obtain
\begin{multline} \label{aux:04}
  2\mu \bsb{D}[\bsb{u}] \!:\! \bsb{D}[\bsb{u}] \theta_n
  + 2\bsb{f}\cdot(\overstar{\bsb{u}}{}^{\sms{S}}-\nabla\bsb{u}\cdot\bsb{\theta}) \\
 = \Big[ -\frac{1}{\mu} f_{\tau}^2 + 4\mu \Big( \frac{R'u^{\sms{S}}}{R\alpha} \Big)^2
  + \frac{1}{\mu}(f_n+p)p + 2\kappa u^{\sms{S}} f_{\tau} \Big] \theta_n 
  -\frac{2}{\alpha}f_{\tau}\theta_{\tau}(u^{\sms{S}} )' + \frac{2}{\alpha}u^{\sms{S}} f_n\theta_n '.
\end{multline}
We then similarly derive the formulas
\begin{align}
\MoveEqLeft[13]
  2\mu  \bsb{D}[\bsb{u}] \!:\! \bsb{D}[\hat{\bsb{u}}]  \theta_n
  + \hat{\bsb{f}}\cdot (\overstar{\bsb{u}}{}^{\sms{S}} - \nabla\bsb{u}\cdot\bsb{\theta})
  - \bsb{f} \cdot \nabla \hat{\bsb{u}} \cdot \bsb{\theta} \\
 &= \Big[ \kappa u^{\sms{S}} \hat{f}_{\tau} - \frac{1}{\mu} f_{\tau}\hat{f}_{\tau} + \frac{1}{2\mu}(f_n+p)\hat{p} \Big] \theta_n \label{aux:05} 
  - \frac{1}{\alpha}\hat{f}_{\tau}\theta_{\tau}(u^{\sms{S}} )' - \frac{1}{\alpha}u^{\sms{S}} \hat{p}\theta_n ' \\
  2\mu \bsb{D}[\hat{\bsb{u}}]\!:\!\bsb{D}[\hat{\bsb{u}}]\theta_n - 2\hat{\bsb{f}}\cdot \nabla\hat{\bsb{u}}\cdot\bsb{\theta}
 &= -\frac{1}{\mu} \hat{f}_{\tau}^2 \theta_n, \label{aux:06}
\end{align}
having in particular used that $\hat{f_n} = - \hat{p}$. The sought material derivative formulas~(\ref{eq:JWprime}-c) finally follow by using identities \eqref{aux:04}, \eqref{aux:05} and \eqref{aux:06} in formulas~(\ref{eq:derividentities}a-c)

\proofstep{6. End of proof} Steps 1 to 5 above complete the proof of identities~(\ref{eq:JWprime}-c), and formula~\eqref{eq:JDprime} follows directly from the definition of $J_{\text{drag}}$ given after~\eqref{eq:opt_drag}.

\section{Numerical method}
\label{sc:NumMeth}

In this section, we describe the main numerical methods employed for solving the shape optimization problem, utilizing the shape sensitivity formulas derived in the preceding sections.

\subsection{Stokes PDE solver}
The shape optimization problem requires solution of three different boundary value problems: (i) the {\em forward problem} with given slip velocity~(\ref{eq:forward}--\ref{eq:NNF}), (ii) the {\em adjoint problem} with given Dirichlet conditions~\eqref{eq:adj0}, and (iii) the {\em auxiliary problem} with mixed boundary conditions~\eqref{eq:optslip} for determining the optimal slip for a given shape.   
Similar to our previous work~\cite{guo2021slip,guo2021disp} which studied the optimization of slip velocity profiles and ciliary locomotion for a given shape $\Gamma$, we employ an indirect boundary integral equation (BIE) formulation for solving these three problems. 

Specifically, we start from the single-layer potential {\em ansatz}, which expresses the velocity $\bsb{u}$ as a convolution of an unknown axially-symmetric density function $\bsb{\zeta}$ defined on $\Gamma$ with the Green's function for the Stokes equations:
\begin{equation}
  \bsb{u}(\bsb{x})
  = \mathcal{S}[\bsb{\zeta}] (\bsb{x}), \quad \text{where}\quad  \mathcal{S}[\bsb{\zeta}](\bsb{x}) = \frac{1}{8\pi\mu}\int_\Gamma \left(\frac{1}{|\bsb{r}|} \mathbf{I} +  \frac{\bsb{r} \otimes \bsb{r} }{|\bsb{r}|^3}\right) \, \bsb{\zeta}\left(\bsb{y}\right) \, \mathrm{d}S \quad\text{and}\quad \bsb{r} = \bsb{x} - \bsb{y}.\label{u:SL} \end{equation}
 The ansatz \eqref{u:SL} satisfies the Stokes PDE by construction and taking the limit as $\bsb{x}$ approaches $\Gamma$ from the exterior and applying boundary conditions for each of the three problems results in a set of BIEs for the unknown density $\bsb{\zeta}$. The traction vector $\bsb{f}$ and pressure density $p$ on the surface $\Gamma$ can be evaluated as a convolution of $\bsb{\zeta}$ with the traction and the pressure kernels respectively:
 \begin{align}
  \bsb{f}(\bsb{x}) 
 &=-\frac{1}{2}\bsb{\zeta}\left(\bsb{x}\right) + \mathcal{K} [\bsb{\zeta}](\bsb{x}), \quad\text{where}\quad \mathcal{K} [\bsb{\zeta}] (\bsb{x})=  \frac{3}{4\pi}\int_{\Gamma} \left(\frac{\bsb{r} \otimes \bsb{r} }{|\bsb{r}|^5}\right) (\bsb{r}\cdot\bsb{n}\left(\bsb{x}\right))\bsb{\zeta}\left(\bsb{y}\right)\, \mathrm{d}S, \label{f:SL} \\
 p(\bsb{x}) 
 &=-\frac{1}{2}\bsb{\zeta}\left(\bsb{x}\right)\cdot\bsb{n}\left(\bsb{x}\right) + \frac{1}{4\pi}\int_{\Gamma} \frac{\bsb{r} \cdot \bsb{\zeta}\left(\bsb{y}\right) }{|\bsb{r}|^3}\, \mathrm{d}S. \label{p:SL}
\end{align}

In the case of the adjoint problem, denoting the unknown density by $\hat{\bsb{\zeta}}$, substituting the ansatz in the Dirichlet boundary condition in~\eqref{eq:adj0} and then evaluating the traction by~\eqref{f:SL}, we get
\begin{equation} 
    \mathcal{S}[\hat{\bsb{\zeta}}](\bsb{x}) 
 = \bsb{e}_z, \qquad \hat{\bsb{f}}=-\frac{1}{2}\hat{\bsb{\zeta}}+\mathcal{K}[\hat{\bsb{\zeta}}]
 \qquad \text{\em (BIE for adjoint problem~\eqref{eq:adj0}).}
\label{eq:fullsys2}
\end{equation}
Then, the mixed boundary conditions featured in the auxiliary problem~\eqref{eq:optslip} yield
\begin{equation}\label{eq:fullsys3} 
\begin{aligned}
        \Big(-\frac{1}{2} + \mathcal{K} \Big)[\Tilde{\bsb{\zeta}}] \cdot \bsb{\tau} = \hat{\bsb{f}}\cdot\bsb{\tau} & \\
           \mathcal{S}[\Tilde{\bsb{\zeta}}]\cdot\bsb{n}= 0 &
  \end{aligned} \qquad \text{\em (BIEs for auxiliary problem~\eqref{eq:optslip}).}
\end{equation}
Similarly, for the forward problem~\eqref{eq:uD}, the unknowns $\bsb{\zeta}$ and $U$ can be obtained by substituting~\eqref{u:SL} in~\eqref{eq:uD} and~\eqref{f:SL} in~\eqref{eq:NNF}, yielding for any $\bsb{x}$ on $\Gamma$,
\begin{equation}\label{eq:fullsys} 
\begin{aligned}
    \mathcal{S}[\bsb{\zeta}](\bsb{x}) - U\bsb{e}_z =  \bsb{u}^{\sms{S}} (\bsb{x}) &\\
        \Big<\Big(-\frac{1}{2} + \mathcal{K} \Big)[\bsb{\zeta}] \, , \,\bsb{e}_z \Big>_\Gamma = 0 &
  \end{aligned} \qquad \text{\em (BIEs for the forward problem~\eqref{eq:uD}).}
\end{equation}
We convert the weakly singular boundary integrals in~\cref{eq:fullsys,eq:fullsys2,eq:fullsys3} into convolutions on the generating curve $\gamma$ by performing an analytic integration in the orthoradial direction, and applying a high-order quadrature rule designed to handle the $log-$singularity of the resulting kernels~\cite{veerapaneni2009numerical}. In addition, shape sensitivity formulas in Proposition~\ref{prop:Jw} require evaluating the pressure fields $p$ and $\hat{p}$ on the particle surface $\Gamma$. We make use of a generalized Gaussian quadrature rule, developed in~\cite{bremer2010nonlinear}, for accurate numerical integration of the exhibited strong $r^{-2}-$singularity of the kernel in~\eqref{p:SL}.

\subsection{Finite parametrizations of slip velocity and shape}\label{sc:finite}
We employ fifth-order B-splines to parametrize the unknowns of the optimization problems~\eqref{eq:const_opt} and~\eqref{eq:opt_drag}, namely the scalar slip velocity profile $u^{\sms{S}}$ and the functions $R,Z$ that define $\Gamma$ through ~\eqref{wall:shape}), by
\begin{equation}
u^{\sms{S}}(t) = \bsb{\xi}_{u}\tv \bsb{w}(t) , \quad  R(t)= \bsb{\xi}_R \tv \bsb{B}(t), \quad Z(t)= \bsb{\xi}_Z\tv \bsb{B}(t),  \quad t\in[0,\pi].  \label{eq:finiteparamvec}
\end{equation}
The vector-valued functions $\bsb{w}(t)$ and $\bsb{B}(t)$ are provided in Appendix \ref{apd:bsp}. The design vectors for $u^{\sms{S}}$ and $\gamma$ are denoted by $\bsb{\xi}_{u} = \big( \xi^1_{u}, \dots, \xi^{N\!_{u}}_{u} \big)\tv $ and $\bsb{\xi}_{\gamma} = \big( \bsb{\xi}_R,\bsb{\xi}_Z \big) = \big( \xi^1_{R},\dots, \xi^{N\!_{R}}_{R}, \xi^1_{Z},\dots,\xi^{N\!_{Z}}_{Z} \big)\tv $, respectively.
In order to satisfy the constraints~(\ref{eq:rz}a,c), the shape design vector $\bsb{\xi}_{\gamma}$ has $N\!_R+N\!_Z-4$ degrees of freedom, as $\xi^1_{R}, \xi^{N\!_{R}}_{R}, \xi^1_{Z}, \xi^{N\!_{Z}}_{Z}$ are determined upon other entries in $\bsb{\xi}_{\gamma}$ (see Appendix \ref{apd:bsp}).

Then, transformation velocities $\bsb{\theta}$ associated with the parametrization~\eqref{eq:finiteparamvec} may be defined as
\begin{equation}
  \bsb{\theta}
 = \bsb{\zeta}_R \tv \bsb{B}(t) \bsb{e}_r + \bsb{\zeta}_Z\tv \bsb{B}(t)\bsb{e}_z \label{theta:discr} 
\end{equation}
where $(\bsb{\zeta}_R$, $\bsb{\zeta}_Z)=:\bsb{\zeta}_{\gamma}$ define perturbation directions for the shape parameter vectors $\bsb{\xi}_R$, $\bsb{\xi}_Z$ which must be consistent with~(\ref{eq:rz}a,c) and are otherwise arbitrary.

\subsection{Numerical optimization scheme}\label{sc:NumSch}
 
\begin{figure}[h] \centering
  \includegraphics[width=0.7\textwidth]{./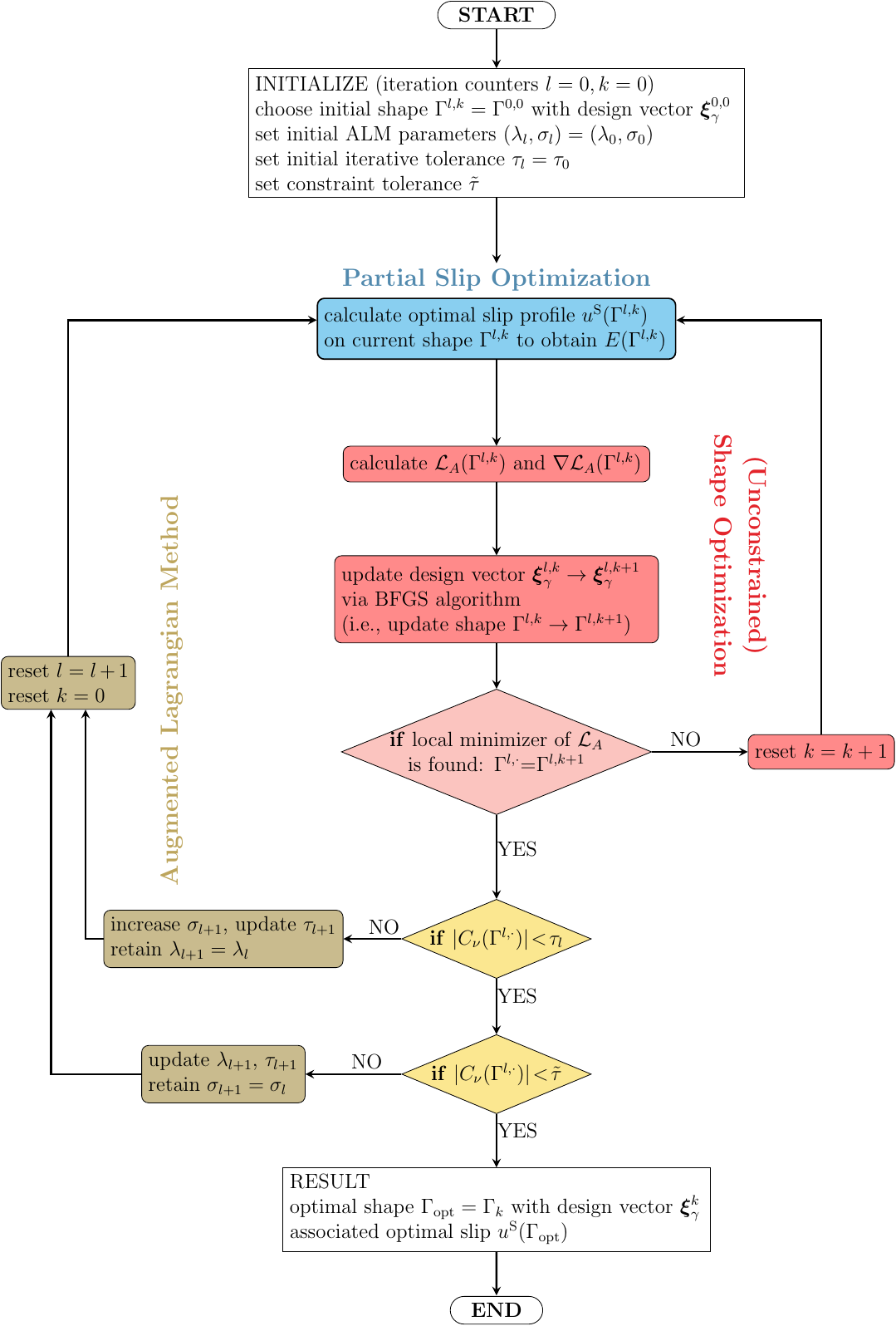}
  \caption{{\em Framework of the optimization algorithm for the maximum swimming efficiency. The algorithm consists of three main parts shown in different colors. The single block in blue is the slip optimization as described in Section \ref{sec:slipopt}. It is worth noting that the slip optimization process is nested in the shape optimization process (in red/pink). The process in yellow/brown color briefly demonstrate the algorithm of the Augmented Lagrangian Method (ALM) in Section \ref{sc:NumSch}.}}
  \label{fig:flow}
\end{figure}

We use the augmented Lagrangian method (ALM) to adapt the constrained optimization problem \eqref{eq:const_opt} to a sequence of unconstrained problems~\cite[Chapter~17]{wright2006numerical} of the form
\begin{equation}
\min\limits_{\Gamma} \mathcal{L}_A(u^{\text{S}}, \Gamma) \quad \text{with} \quad  \mathcal{L}_A(\Gamma) = - E(u^{\sms{S}}, \Gamma) - \lambda_l C_{\nu}(\Gamma) + \frac{\sigma_l}{2} C_{\nu}{}^2(\Gamma), \qquad l=0,1,2,\ldots
\label{eq:effi_opt_alm}
\end{equation}
The variable $\lambda_l$ is an explicit estimate of the Lagrange multiplier, and $\sigma_l$ is a penalty parameter. The optimization algorithm fixes the values of $\lambda_l$ and $\sigma_l$ at the $l$th iteration and performs minimization for $\mathcal{L}_A$. Fig.~\ref{fig:flow} depicts the framework of the optimization problem \eqref{eq:effi_opt_alm} as the slip optimization is implemented internally.
In this flowchart, the shape $\Gamma$ and its design vector $\bsb{\xi}_{\gamma}$ are often accompanied by superscripts $l,k$. The first number $l$ indicates the iteration of ALM when updating $\lambda_l$, $\sigma_l$, and the second number indicates the iteration in the unconstrained shape optimization problem \eqref{eq:effi_opt_alm} with specific $l$.
The optimization process starts with an arbitrary initial shape $\Gamma^{0,0}$ given by the design vector $\bsb{\xi}_{\gamma}^{0,0}$.
The optimal slip profile for the current shape is calculated directly as described in Section \ref{sec:slipopt}, shown as a single blue block in Fig.~\ref{fig:flow}. The shape is then updated via the BFGS algorithm for the unconstrained problem \eqref{eq:effi_opt_alm}, which makes use of the shape derivative of $E(\Gamma)$ given by Proposition~\ref{prop:dE}, and the slip profile must be recalculated in every iteration of the BFGS algorithm. When the optimal shape is found, the process moves to parameter updating of ALM (if tolerance criteria are not satisfied), then a new unconstrained problem \eqref{eq:effi_opt_alm} is defined and implemented. If the local minimizer of \eqref{eq:effi_opt_alm} for current $l$ satisfies the tolerance criteria, the optimization is completed and outputs the optimal shape.

Similarly, the constrained drag force minimization problem~\eqref{eq:opt_drag} is solved using the sequence of unconstrained problems
\begin{equation}
    \min\limits_{\Gamma} \mathcal{L}_A(\Gamma) \quad \text{with} \quad  \mathcal{L}_A(\Gamma) = J_{\text{drag}} - \lambda_l C_{\nu}  + \frac{\sigma_l}{2} C_{\nu}{}^2, \qquad l=0,1,2,\ldots.
    \label{eq:drag_opt_alm}
\end{equation}
The optimization is implemented similarly as Fig.~\ref{fig:flow}, except that the partial slip optimization step is omitted. Solving the unconstrained problem \eqref{eq:drag_opt_alm} makes use of~\eqref{eq:JDprime} for the shape derivative of the drag force. It does not involve any slip velocity, thus, the optimization starts with calculating $\mathcal{L}_A$ and $\nabla\mathcal{L}_A$ in \eqref{eq:drag_opt_alm} directly. The rest of the process remains the same as before. \enlargethispage*{1ex}

\begin{table}[b]
\centering
    \begin{tabular}{cccccc}\hline
      Initial $\Gamma$ & Perturbed $\Gamma_\eta\big|_{\eta=1}$ & $E'$ abs.err. & $E'$ rel.err. & $J'_{\sms{drag}}$ abs.err. & $J'_{\sms{drag}}$ rel.err. \\ 
      \raisebox{-0.5\height}{\includegraphics[height=40pt]{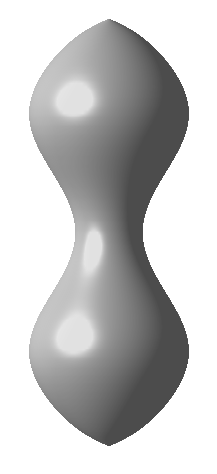}} & \raisebox{-0.5\height}{\includegraphics[height=40pt]{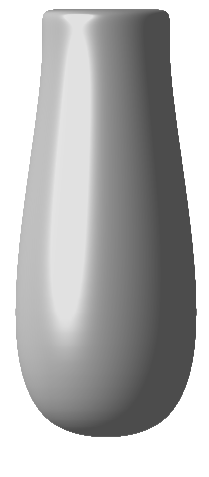}} & \num[round-mode=places,round-precision=2]{3.970360e-07} &  \num[round-mode=places,round-precision=2]{8.452474e-07} &  \num[round-mode=places,round-precision=2]{5.445094e-08} & \num[round-mode=places,round-precision=2]{5.230632e-07}  \\ 
      \raisebox{-0.5\height}{\includegraphics[height=40pt]{./FigUpdated/shape_0_peanut}} & \raisebox{-0.5\height}{\includegraphics[height=40pt]{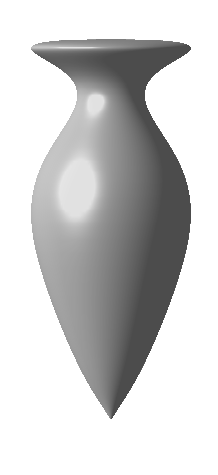}} & \num[round-mode=places,round-precision=2]{2.673148e-08} & \num[round-mode=places,round-precision=2]{2.142554e-07} & \num[round-mode=places,round-precision=2]{2.076638e-09} & \num[round-mode=places,round-precision=2]{8.223794e-08} \\ 
      \raisebox{-0.5\height}{\includegraphics[height=40pt]{./FigUpdated/shape_0_peanut}} & \raisebox{-0.5\height}{\includegraphics[height=40pt]{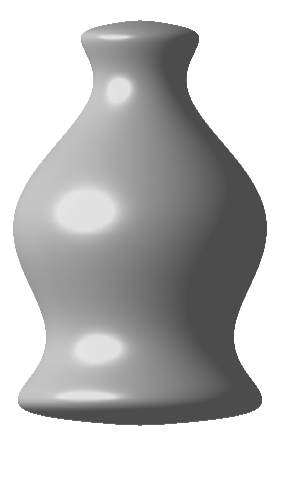}} & \num[round-mode=places,round-precision=2]{8.135964e-07} & \num[round-mode=places,round-precision=2]{8.117251e-07}  & \num[round-mode=places,round-precision=2]{2.728198e-07} & \num[round-mode=places,round-precision=2]{1.120857e-06} \\ \hline 
      \end{tabular}
      \captionof{table}{{\em Comparison of shape sensitivities obtained by analytic formulas and by the central finite difference scheme. The absolute errors (abs.err.) is $| J' - J'_{FD}|$, where $J'$ is the result of the analytic sensitivity formula (either~\eqref{eq:JDprime} or~\eqref{eq:ddE}), and $J'_{FD}$ is the result of \eqref{eq:JprimeFD}. The relative error (rel.err.) is calculated by $| (J' - J'_{FD})/J'_{FD}|$. Note that the perturbed shapes are shown at $\eta = 1$ for visualization purpose to highlight the direction of perturbation.}}\label{tab:perturb}
\end{table}

\section{Results and discussion}
\label{sc:results}

In this section, we first validate the shape sensitivity formulas derived in Section~\ref{sec:shape_sens}. Then, we present several numerical experiments to demonstrate the shape optimization approach for microswimmers and analyze the optimal shapes obtained for various reduced volumes. Additionally, we compare the results with a simple drag minimization problem to highlight the differences in different configurations under different objectives: maximizing efficiency or minimizing drag. \enlargethispage*{5ex}

\subsection{Verification of shape sensitivities} Here, we validate the shape sensitivity formulas~\eqref{eq:JDprime} and \eqref{eq:ddE} by comparing them with the numerical approximations via the finite difference method. We use the central difference formula for comparison, given by
\begin{equation}\label{eq:JprimeFD}
    J'_{\sms{FD}} = \frac{J\big(\Gamma(\bsb{\xi}_{\gamma} + \eta \bsb{\zeta}_{\gamma})\big) - J\big(\Gamma(\bsb{\xi}_{\gamma} - \eta \bsb{\zeta}_{\gamma})\big)}{2\eta}
\end{equation}
where $J$ is either $E(\Gamma)$ or $J_{\text{drag}}(\Gamma)$ and $\bsb{\zeta}_{\gamma}$ is a shape parameter perturbation direction that defines a transformation velocity $\bsb{\theta}$ through~\eqref{theta:discr}.

We choose an arbitrary initial shape and then perturb it into a variety of other arbitrary shapes. Table~\ref{tab:perturb} lists the absolute and relative errors between the analytic formula evaluations and their finite difference approximations, with the step size $\eta=10^{-3}$ in~\eqref{eq:JprimeFD} for all cases. The error results validate the correctness of the analytic shape sensitivity formulas given in Section~\ref{sec:shape_sens}.\enlargethispage*{1ex}

\subsection{Optimization results} First, we showcase the iterative shape optimization process towards a maximal efficiency shape  in 
Fig.~\ref{fig:shapeiterations3}, starting from an arbitrary peanut-shape. The flow field snapshots around the microswimmer driven by the optimal slip profile that maximizes the swimming efficiency are demonstrated during the optimization process.
In particular, a peanut-shape microswimmer with $\nu = 0.7$ is used as the initial shape. The high-velocity region can be observed around the entire swimmer in early iterations. This high-velocity region, in turn, leads to a low swimming efficiency at $J_{\sms{E}} = 53\%$.
During the optimization process, the swimmer first transitions from its original concave shape to a convex shape that resembles a prolate spheroidal shape with blunt poles (iteration 12), and then ``sharpens'' the poles in the next few iterations (iteration 17), leading to a swimming efficiency as high as $J_E=332\%$. We note that the high-velocity region is gradually reduced during the optimization, and the reduction progresses from the equatorial region toward the poles. \enlargethispage*{3ex}

\begin{figure}[t] \centering
  \includegraphics[width=0.92\textwidth,trim={0pt 0pt 0pt 0pt},clip]{./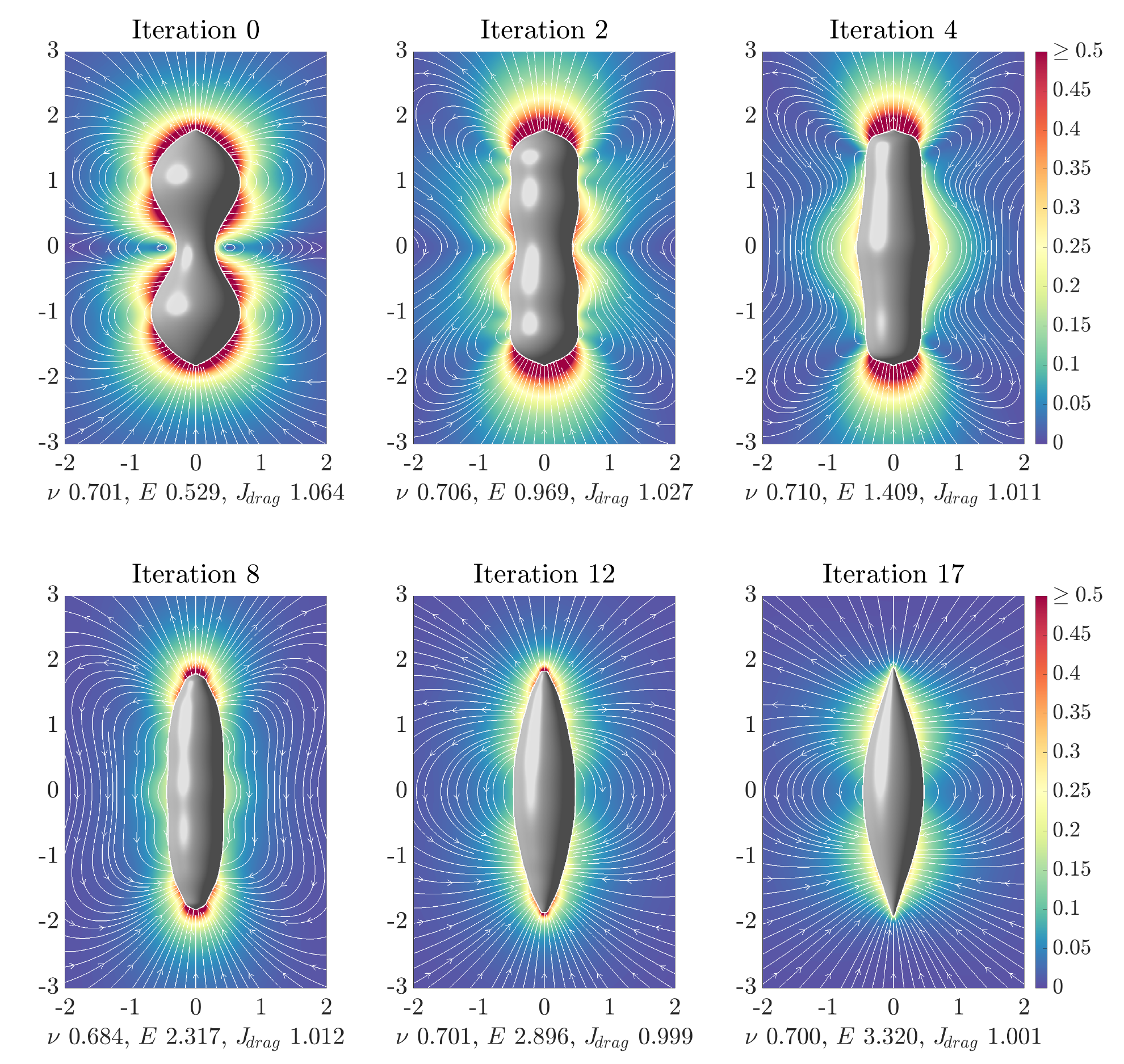}
  \caption{{\em Snapshots from the shape optimization of an initially peanut-shaped microswimmer with reduced volume, $\nu_0=0.7$. Here, the fluid velocity is shown in the lab (fixed) frame and the propulsion velocity $U$ is scaled to one.}}\label{fig:shapeiterations3}
\end{figure}

Next, we consider several initial shapes whose reduced volumes are in the range $0.6 \le \nu \le 1.0$ and optimize their shape and associated slip profiles.
The swimming efficiency corresponding to the optimal shape and slip is referred to as the maximal swimming efficiency, and is shown in Fig~\ref{fig:JvsRV}(a) in orange stars as a function of reduced volume. 
Since the only possible shape for $\nu = 1$ is sphere, 
the optimization problem reduces to finding the optimal slip profile of a sphere to maximize the swimming efficiency. In this case, we recover the standard result that the optimal profile is a sine function $u^S = \sin(t)$ that yields the swimming efficiency $J_E = 50\%$~\cite{michelin2010efficiency}.
As $\nu$ decreases, the shapes of the microswimmers deviate from sphere. The maximal swimming efficiency monotonically increases with the decrease of $\nu$. 
For all reduced volumes we test, the optimal shapes that maximize the swimming efficiency are pointy elongated front-back symmetric shapes as shown in the insets of Fig.~\ref{fig:JvsRV}(a).
As references, we optimize the slip profiles on two other shape families and compare the swimming efficiencies against the maximal swimming efficiencies. Specifically, we consider prolate spheroids and shapes that minimize the fluid drag for given reduce volumes $\nu$. The swimming efficiencies and the corresponding shapes are presented in Fig.~\ref{fig:JvsRV}(a). 
To obtain the swimming efficiencies corresponding to these shape families, we apply the partial optimization method with fixed shape described in Section~\ref{sec:slipopt}.
Notably, for any given reduced volume, the prolate spheroid always underperforms its two counterparts (the shape that maximizes the swimming efficiency and the shape that minimizes the drag force). The swimming efficiencies of the shapes that minimize fluid drag are no more than $10\%$ worse than those of the optimal shapes when $0.8 \le \nu \le 1.0$, but become less competitive when $\nu$ is further decreased. For example, when $\nu = 0.60$, the swimming efficiencies of the prolate spheroid and the shape that minimizes fluid drag are $386\%$ and $480\%$ respectively, significantly lower than $580\%$ obtained by the optimal shape.

Unlike the maximal swimming efficiency, the drag force required to tow a rigid body along the axis of symmetry at unit speed does not vary monotonically with the reduced volume. 
Fig.~\ref{fig:JvsRV}(b) shows the drag force normalized by the force required to tow a rigid sphere of the same volume.
The drag force is minimized when $\nu \approx 0.90$, in which case the normalized drag force is approximately $0.95$, consistent with the classical results~\cite{pironneau1973optimum,bourot1974numerical}. 
Further decreasing the reduced volume increases the fluid drag. In fact, our results show that shapes with $\nu < 0.70$ experience higher fluid drag compare to the spheres of the same volume.
That being said, unlike the drastic effect of reduced volume on the swimming efficiency, the change in the fluid drag resulted from the change in shape is rather moderate, ranging from the low of $0.95$ to the high of $1.06$. 
The values of swimming efficiencies and fluid drag for these three shape families at different reduced volumes are shown in Table~\ref{tab:compare_all}.

\begin{figure}[h] \centering
  \includegraphics[width = 0.95\textwidth]{./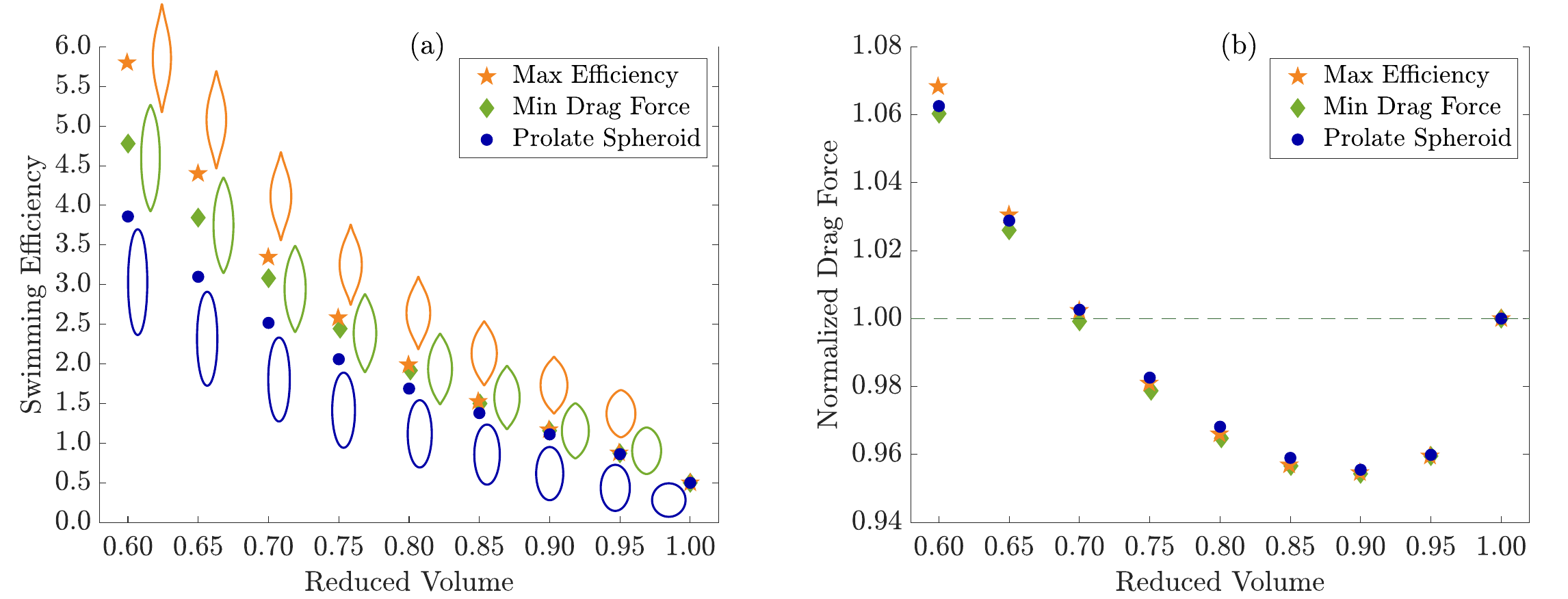}
  \caption{{\em (a) Swimming efficiency versus reduced volume of different shape families. The shapes and the slip profiles that maximize the swimming efficiencies are obtained using algorithms detailed in Fig.~\ref{fig:flow}. The swimming efficiencies for the prolate spheroid and the shape that minimizes the fluid drag are obtained by the slip optimization algorithm detailed in Section~\ref{sec:slipopt} while the body shapes are fixed.
  (b) Normalized drag force versus reduced volume of different shape families. No significant difference is found in the fluid drag force between the three shape families for any given reduced volume, as long as the shape is similar to elongated front-back symmetric prolate spheroids.}} \label{fig:JvsRV}
\end{figure}

\begin{table}[t]
    \centering
    \addtolength{\tabcolsep}{6pt}   
    \begin{tabular}{ccccc}
    \toprule 
    Reduced Volume & & \multicolumn{3}{c}{Microswimmer Shape Type} \\
    \cmidrule(r){3-5} 
    Constraint &  & Prolate Spheroid & Max Efficiency* & Min Drag Force \\
    \midrule 
    \multirow{3}{*}{$\nu_0 = 0.60$} & $\nu$ & \num[round-mode=places,round-precision=3]{0.600000} & \num[round-mode=places,round-precision=3]{0.599398}  & \num[round-mode=places,round-precision=3]{0.600115} \\
    & $J_{\sms{E}}$ & \num[round-mode=places,round-precision=3]{3.859753} & \num[round-mode=places,round-precision=3]{5.800658} & \num[round-mode=places,round-precision=3]{4.779075} \\
    & $J_{\text{drag}}$ & \num[round-mode=places,round-precision=3]{1.062557} & \num[round-mode=places,round-precision=3]{1.068315} & \num[round-mode=places,round-precision=3]{1.060323} \\
    \midrule 
    \multirow{3}{*}{$\nu_0 = 0.65$} & $\nu$ & \num[round-mode=places,round-precision=3]{0.650000} & \num[round-mode=places,round-precision=3]{0.649839} & \num[round-mode=places,round-precision=3]{0.649966}  \\
    & $J_{\sms{E}}$ & \num[round-mode=places,round-precision=3]{3.099078} & \num[round-mode=places,round-precision=3]{4.401910} & \num[round-mode=places,round-precision=3]{3.844991} \\
    & $J_{\text{drag}}$ & \num[round-mode=places,round-precision=3]{1.028850} & \num[round-mode=places,round-precision=3]{1.030539} & \num[round-mode=places,round-precision=3]{1.026001} \\
    \midrule 
    \multirow{3}{*}{$\nu_0 = 0.70$} & $\nu$ & \num[round-mode=places,round-precision=3]{0.700000} & \num[round-mode=places,round-precision=3]{0.699824} & \num[round-mode=places,round-precision=3]{0.700048} \\
    & $J_{\sms{E}}$ & \num[round-mode=places,round-precision=3]{2.517108} & \num[round-mode=places,round-precision=3]{3.346512} & \num[round-mode=places,round-precision=3]{3.081304} \\
    & $J_{\text{drag}}$ & \num[round-mode=places,round-precision=3]{1.002602} & \num[round-mode=places,round-precision=3]{1.002411} & \num[round-mode=places,round-precision=3]{0.999188} \\
    \midrule 
    \multirow{3}{*}{$\nu_0 = 0.75$} & $\nu$ & \num[round-mode=places,round-precision=3]{0.750000} & \num[round-mode=places,round-precision=3]{0.749533} & \num[round-mode=places,round-precision=3]{0.750949} \\
    & $J_{\sms{E}}$ & \num[round-mode=places,round-precision=3]{2.059092} & \num[round-mode=places,round-precision=3]{2.582476} & \num[round-mode=places,round-precision=3]{2.444336} \\
    & $J_{\text{drag}}$ & \num[round-mode=places,round-precision=3]{0.982620} & \num[round-mode=places,round-precision=3]{0.981012} & \num[round-mode=places,round-precision=3]{0.978758} \\
    \midrule 
    \multirow{3}{*}{$\nu_0 = 0.80$} & $\nu$ & \num[round-mode=places,round-precision=3]{0.800000} & \num[round-mode=places,round-precision=3]{0.799544} & \num[round-mode=places,round-precision=3]{0.800934} \\
    & $J_{\sms{E}}$ & \num[round-mode=places,round-precision=3]{1.688483} & \num[round-mode=places,round-precision=3]{1.988867} & \num[round-mode=places,round-precision=3]{1.916884} \\
    & $J_{\text{drag}}$ & \num[round-mode=places,round-precision=3]{0.968175} & \num[round-mode=places,round-precision=3]{0.966074} & \num[round-mode=places,round-precision=3]{0.964769} \\
    \midrule 
    \multirow{3}{*}{$\nu_0 = 0.85$} & $\nu$ & \num[round-mode=places,round-precision=3]{0.850000} & \num[round-mode=places,round-precision=3]{0.849157} & \num[round-mode=places,round-precision=3]{0.850410} \\
    & $J_{\sms{E}}$ & \num[round-mode=places,round-precision=3]{1.379398} & \num[round-mode=places,round-precision=3]{1.527532} & \num[round-mode=places,round-precision=3]{1.499798} \\
    & $J_{\text{drag}}$ & \num[round-mode=places,round-precision=3]{0.959001} & \num[round-mode=places,round-precision=3]{0.956948} & \num[round-mode=places,round-precision=3]{0.956629} \\
    \midrule 
    \multirow{3}{*}{$\nu_0 = 0.90$} & $\nu$ & \num[round-mode=places,round-precision=3]{0.900000} & \num[round-mode=places,round-precision=3]{0.899427} & \num[round-mode=places,round-precision=3]{0.899949} \\
    & $J_{\sms{E}}$ & \num[round-mode=places,round-precision=3]{1.111298} & \num[round-mode=places,round-precision=3]{1.168898} & \num[round-mode=places,round-precision=3]{1.157876} \\
    & $J_{\text{drag}}$ & \num[round-mode=places,round-precision=3]{0.955513} & \num[round-mode=places,round-precision=3]{0.954674} & \num[round-mode=places,round-precision=3]{0.954314} \\
    \midrule 
    \multirow{3}{*}{$\nu_0 = 0.95$} & $\nu$ & \num[round-mode=places,round-precision=3]{0.950000} & \num[round-mode=places,round-precision=3]{0.949312} & \num[round-mode=places,round-precision=3]{0.949980} \\
    & $J_{\sms{E}}$ & \num[round-mode=places,round-precision=3]{0.861774} & \num[round-mode=places,round-precision=3]{0.877232} & \num[round-mode=places,round-precision=3]{0.871795} \\
    & $J_{\text{drag}}$ & \num[round-mode=places,round-precision=3]{0.959899} & \num[round-mode=places,round-precision=3]{0.959547} & \num[round-mode=places,round-precision=3]{0.959540} \\
    \midrule 
    \multirow{3}{*}{$\nu_0 = 1.00$} & $\nu$ & \num[round-mode=places,round-precision=3]{1.000} & \num[round-mode=places,round-precision=3]{1.000} & \num[round-mode=places,round-precision=3]{1.000} \\
    & $J_{\sms{E}}$ & \num[round-mode=places,round-precision=3]{0.500} & \num[round-mode=places,round-precision=3]{0.500} & \num[round-mode=places,round-precision=3]{0.500} \\
    & $J_{\text{drag}}$ & \num[round-mode=places,round-precision=3]{1.000} & \num[round-mode=places,round-precision=3]{1.000} & \num[round-mode=places,round-precision=3]{1.000} \\
    
    \bottomrule
    \end{tabular}
    \caption{{\em Comparison of a variety of constraints and shapes. The corresponding shape are exhibited in Fig.~\ref{fig:JvsRV}. The Max Efficiency* shapes are obtained from using the Min Drag Force shape as the initial shape.}}\label{tab:compare_all} 
\end{table}


The three microswimmers of the same reduced volume $\nu = 0.7$ are shown in Fig.~\ref{fig:flow_ne_08}. The flow fields are obtained by the optimal slip profiles corresponding to these shapes. 
All microswimmers are swimming at the unit translational velocity.
In addition to the front-back symmetric shapes, the optimal slip profiles for each of these shapes are also front-back symmetric, expected from the linearity of Stokes equations. 
The fluid velocities around the microswimmers are faster closer to the swimmer body because of the slip-boundary conditions, and quickly decay to 0 away from the swimmer.
Compared to the two pointy shapes, the prolate spheroid has a bigger region with high velocities close to the poles. These high-velocity regions are sources of extra power loss (fluid dissipation) that negatively impact swimming efficiencies.



\begin{figure}[t] \centering
  \includegraphics[width=0.95\textwidth]{./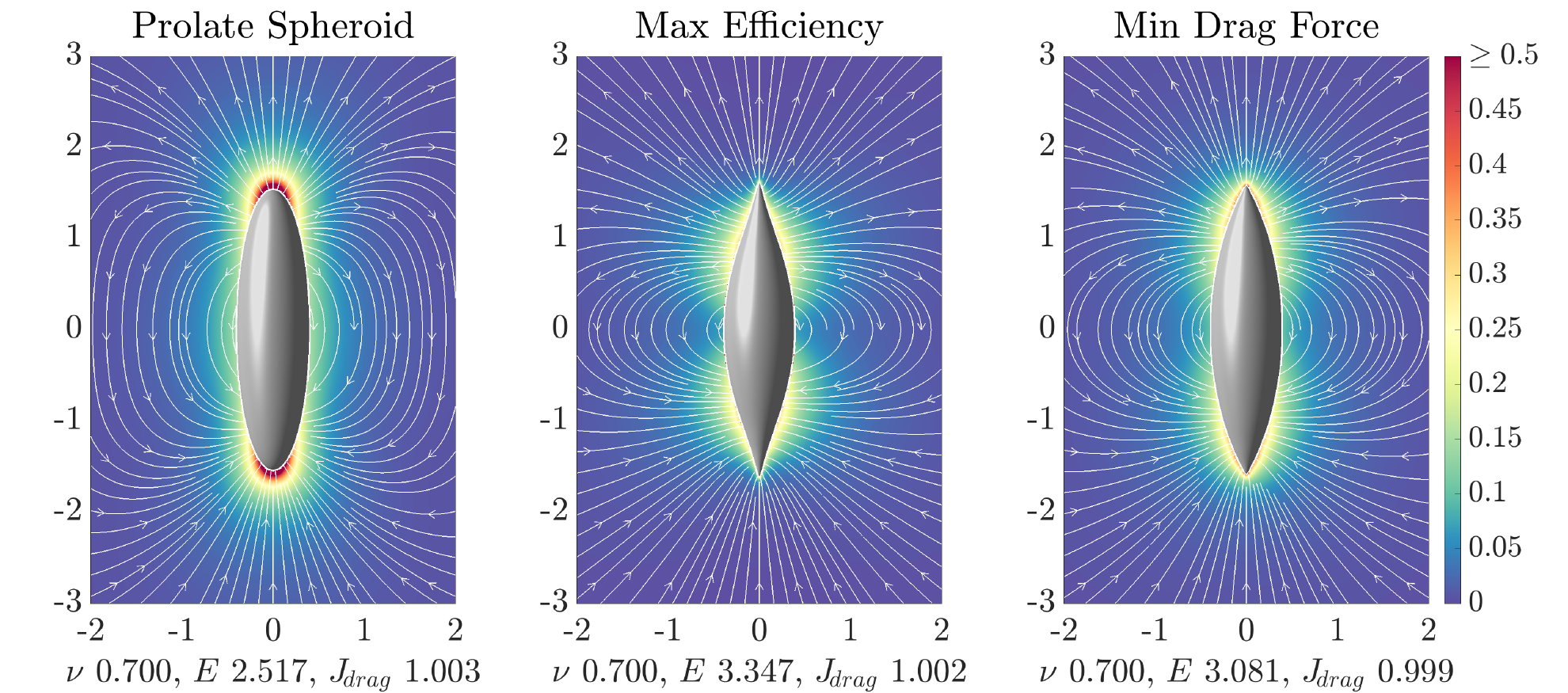}
  \caption{{\em Velocity flow fields (in lab frame) for different body shapes when the reduced volume is $\nu=0.7$. The swimming translation velocity $U$ is scaled to one.}}
  \label{fig:flow_ne_08}
\end{figure}

\section{Conclusions} \label{sc:conclusions}
In this work, we proposed a computational framework that optimizes the shape and the slip velocity of a slip-driven axisymmetric microswimmer. The objective function is chosen to be the swimming efficiency in the fluid medium enclosing the microswimmer, and the microswimmers are subject to constant reduced volumes. The forward problems are solved using a boundary integral equation (BIE) method of high accuracy.

The optimization is performed iteratively during which the optimal slip profile for a given shape and the optimal shape for a given slip profile are updated. We use an Augmented Lagrangian Method (ALM) to enforce the constraint of constant reduced volume.
The slip-optimization is formulated as a symmetric generalized eigenvalue problem that requires only solving one forward problem with mixed boundary condition \eqref{eq:optslip} in addition to the adjoint problem \eqref{eq:adj0} - an improvement of our previous work \cite{guo2021slip} which requires solving one forward problem for each basis function of the slip profile.
The shape sensitivities suitable for this problem are derived using standard treatments. The results are validated against Finite Difference method and show excellent accuracy.

Our optimization results show that the optimal shape for the axisymmetric slip-driven microswimmer with a given reduced volume is a front-back symmetric elongated shape with sharp tips. Optimal slip profiles associated with these shapes result in small high-velocity regions close to the microswimmer. The hydrodynamic efficiency can be significantly higher than that of a prolate spheroid with the same reduced volume $\nu$, especially at small $\nu$'s. It is also worthy to note that the shape that minimizes the fluid drag also out-performs the spheroids at given $\nu$.

We note that the optimal shapes obtained here are different than those in \cite{vilfan2012optimal}, in which the optimal shapes demonstrate ripple-like features for large minimal-curvature constraints and long protrusions from poles for small minimal-curvature constraints. In the latter case, the optimal slip velocity is high close to the tips of the protrusions and roughly uniform over the ``body'' of the swimmer. We show that the optimal swimmer shape is more regular, sharp poles nonetheless, if we allow arbitrarily small curvature. 

\bibliographystyle{siamplain}
\bibliography{ref}

\begin{appendix}

\section{Parametrization using B-splines}\label{apd:bsp} We first denote the $5$-th order B-spline on knots $\{0,1,2,3,4,5,6\}$ by $\mathcal{B}_{0} (t)$. By horizontal shift, let $\mathcal{B}_{k} (t) = \mathcal{B}_{0}(t-k)$, for any integer $k$. For the parameter domain $[0,L]$ ($L$ being either $\pi$ or $2\pi$), the following transformation of $\mathcal{B}_k$ is used,
\begin{equation}
B_k(t) = \mathcal{B}_{k-6}\left(\frac{tN\!_L}{L}\right), \quad k=1,2, \dots, (N_L+5), \quad t\in [0,L]
\label{eq:Bbasis}
\end{equation} 
where $N\!_L$ is the number of uniform subintervals of $[0,L]$. 
The (scalar) slip velocity profile $u^{\sms{S}}$ is used to characterize the slip velocity $\bsb{u}^{\sms{S}}$ on $\Gamma$ with $t\in[0,\pi]$. We obtain $N_u$ (uniform) interior grid points of $u^{\sms{S}}$,
\begin{equation}
\text{(a)} \quad u_{\sss 0} = u^{\sms{S}}(0) = 0,  \;\; u_{\sss N_u+1} =u^{\sms{S}}(\pi) = 0, \quad
\text{(b)} \quad u_{\sss k} = u^{\sms{S}}\!
\left(\frac{k\pi}{N_u+1} \right), \quad k=1, \dots, N_u, \quad \text{on $\gamma$}, \label{eq:samplept}
\end{equation} 
where (\ref{eq:samplept}a) stems from \eqref{eq:us0}. In practice, an extended vector $\bsb{v}_{\sms{ext}}$ for $t\in [0,2\pi]$ is used to fit B-spline interpolation,
\begin{equation}
\bsb{v}_{\sms{ext}}= \tranv{ 0, u_{\scriptscriptstyle 1}, u_{\scriptscriptstyle 2}, \dots, u_{{\scriptscriptstyle N_u -1}}, u_{\scriptscriptstyle N_u}, 0, -u_{\scriptscriptstyle N_u}, -u_{\scriptscriptstyle {N_u}-1}, \dots, -u_{\scriptscriptstyle 2}, -u_{\scriptscriptstyle 1}, 0 }.  \label{eq:ubsp}
\end{equation}
The purpose of $\bsb{v}_{\sms{ext}}$ is to maintain periodicity of derivatives at the poles.
For a given $\bsb{v}_{\sms{ext}}$, a function $w(t)$ is constructed by
\begin{equation}
w(t) 
= \bsb{c}\tv \bsb{B}(t) \label{eq:ucB}
\end{equation}
where $\bsb{B}(t) = \tranv{B_{\sss 1}(t), \dots, B_{\sss 2N_u+7}(t)}$ is defined by \eqref{eq:Bbasis}.
The vector $\bsb{c} = \tranv{c_{\sss 1}, \dots, c_{\sss 2N_u+7}}$ is solved by
\begin{equation}
\left\{
\begin{array}{l}
w\left(\frac{\pi(i-1)}{N_u+1} \right) =v_{i}, 
\quad i=1, \dots, 2N_u+3, \\
\displaystyle \frac{d^n w(t)}{dt^n}\Big|_{t=0}= \frac{d^n w(t)}{dt^n} \Big|_{t=\pi}, \quad n=1,2,3,4. \label{eq:fitbsp}
\end{array}
\right.
\end{equation}
where $v_i$ is the $i$-th component of $\bsb{v}_{\sms{ext}}$.

The component function $w_{\sss k}(t)$ in $\bsb{w}(t) = \tranv{w_{\sss 1}(t),\dots,w_{\sss N_u}(t)}$ in \eqref{eq:finiteparamvec} is obtained by letting $k\in\{1,\dots,N_u\}$, $u_{\sss k}=1$ and $u_{\sss n}=0$ when $n\ne k$, and solving for $\bsb{c}$ in \eqref{eq:ucB} by \eqref{eq:fitbsp}.

The representations of $R$ and $Z$ involve $B_k$ directly, thus, we denote $\bsb{B}(t) = \tranv{B_1(t),\dots, B_{N_\gamma}(t)}$ in \eqref{eq:finiteparamvec}. 
An arbitrary $\bsb{\xi}_{\gamma}$ does not satisfy (\ref{eq:rz}a,c), therefore, we determine the value of $\xi^1_{R}$ using $R(0)=0$ by the values of $\xi^2_{R}, \xi^3_{R}, \xi^4_{R}, \xi^5_{R}$, considering the fact that $B_k(0)\ne 0$ only for $k\le 5$. Choosing $N\!_{\gamma}>10$, we similarly determine $\xi^{N\!_{\gamma}}_{R}$ by enforcing $R(\pi)=0$ using the values of $\xi^{N\!_{\gamma}-4}_{R},\xi^{N\!_{\gamma}-3}_{R},\xi^{N\!_{\gamma}-2}_{R},\xi^{N\!_{\gamma}-1}_{R}$ . Similarly, $\xi^1_{Z}$ is determined on values of $\xi^2_{Z}, \xi^3_{Z}, \xi^4_{Z}, \xi^5_{Z}$ by $Z'(0)=0$ and $\xi^{N\!_{\gamma}}_{Z}$ on the values of $\xi^{N\!_{\gamma}-4}_{Z},\xi^{N\!_{\gamma}-3}_{Z},\xi^{N\!_{\gamma}-2}_{Z},\xi^{N\!_{\gamma}-1}_{Z}$ by $Z'(\pi)=0$. These conditions reduce the degree of freedom of $\bsb{\xi}_{\gamma}$ from $2N_{\gamma}$ to $(2N_{\gamma} - 4)$.

\section{Auxiliary proofs}
\label{apd:proofs}

\proofstep{Proof of Lemma~\ref{lemma:Etensor}} A straightforward derivation shows that $\nabla\cdot \bsb{E} \big( (\bsb{u},p), (\bsb{u},p)\big) = \bsb{0}$ holds for any $(\bsb{u}, p)$ satisfying \eqref{eq:forward} (use component notation and verify that $\sum_{j}\pd_j E_{ij} = 0$, $i=1,2,3$). We consequently have
\begin{equation}
\bsb{E}\big( (\bsb{u}, p), (\bsb{u}, p) \big) : \nabla\tv\bsb{\theta} = \nabla\cdot \big[ \bsb{E}\big( (\bsb{u}, p), (\bsb{u}, p) \big) \cdot \bsb{\theta} \big] - \big[\nabla\cdot  \bsb{E}\big( (\bsb{u}, p), (\bsb{u}, p) \big) \big]\cdot \bsb{\theta} = \nabla\cdot \big[ \bsb{E}\big( (\bsb{u}, p), (\bsb{u}, p) \big) \cdot \bsb{\theta} \big]
\end{equation}
Then, observing that $\big( (\bsb{u}, p), (\bsb{v}, q) \big) \mapsto \bsb{E}\big( (\bsb{u}, p), (\bsb{v}, q) \big)$ defines a symmetric bilinear form, we invoke the polarization identity and obtain
\begin{equation}
\begin{aligned}
4\bsb{E}\big( (\bsb{u}, p), (\hat{\bsb{u}}, \hat{p}) \big) : \nabla\tv\bsb{\theta}
 &= \big[ \bsb{E}\big( (\bsb{u}\!+\!\hat{\bsb{u}}, p\!+\!\hat{p}), (\bsb{u}\!+\!\hat{\bsb{u}}, p\!+\!\hat{p}) \big) -  \bsb{E}\big( (\bsb{u}\!-\!\hat{\bsb{u}}, p\!-\!\hat{p}), (\bsb{u}\!-\!\hat{\bsb{u}}, p\!-\!\hat{p}) \big) \big] : \nabla\tv\bsb{\theta} \\[1ex]
 &= \nabla\cdot \big[ \bsb{E}\big( (\bsb{u}\!+\!\hat{\bsb{u}}, p\!+\!\hat{p}), (\bsb{u}\!+\!\hat{\bsb{u}}, p\!+\!\hat{p}) \big)\cdot \bsb{\theta} -  \bsb{E}\big( (\bsb{u}\!-\!\hat{\bsb{u}}, p\!-\!\hat{p}), (\bsb{u}\!-\!\hat{\bsb{u}}, p\!-\!\hat{p}) \big)\cdot\bsb{\theta} \big] \\[1ex]
 &= 4\nabla\cdot \big[ \bsb{E}\big( (\bsb{u}, p), (\hat{\bsb{u}}, \hat{p}) \big)\cdot \bsb{\theta} \big]
\end{aligned}
\end{equation}
whereupon applying the first Green identity (divergence theorem) completes the proof of the claimed identity.

\proofstep{Proof of formula~(\ref{eq:taustar})}
Let the parametric representation of $\Gamma_{\eta}(\bsb{\theta})$ be of the form~\eqref{eq:xeta}. We seek the derivative of the unit tangent vector on $\Gamma_{\eta}$, given by
\begin{equation}
  \alpha_{\eta}(t,\phi)\bsb{\tau}_{\eta}(t,\phi) = \pd_{s}\bsb{x}_{\eta}(t,\phi). \label{eq:taugammaeta}
\end{equation}
with respect to $\eta$ and at $\eta=0$. Since $\bsb{\theta}=\pd_{\eta} \bsb{x}_{\eta}(t,\phi)$, we have
\begin{equation}
  \pd_{\eta}\alpha_{\eta}(t,\phi)\,\big|_{\eta=0}
 = \frac{\pd_t\bsb{x}_{\eta}(t,\phi) \cdot \pd_{\eta s}\bsb{x}_{\eta}(t,\phi) }{\alpha_{\eta}(t,\phi)} \Big|_{\eta=0}
 = [\bsb{\tau}\cdot\pd_{s}\bsb{\theta}](t,\phi).
\label{eq:petapsx}
\end{equation}
The derivative $\overstar{\bsb{\tau}}:=\partial_{\eta}\bsb{\tau}|_{\eta=0}$ is hence evaluated from \eqref{eq:taugammaeta}, \eqref{eq:petapsx} and \eqref{eq:frenet} as
\begin{equation}
  \overstar{\bsb{\tau}}(t,\phi)
 = \pd_{\eta} \bsb{\tau}_{\eta}(t,\phi) \Big|_{\eta=0}
 = \tfrac{1}{\alpha(t)} \big( \pd_t\bsb{\theta} - \big[ \bsb{\tau}\cdot\pd_{s}\bsb{\theta} \big] \bsb{\tau} \big)(t,\phi)
 = \tfrac{1}{\alpha(t)} \big( [ \bsb{n}\cdot\pd_{s}\bsb{\theta} ] \bsb{n} \big)(t,\phi)
 = \big[ \kappa \theta_{\tau} + \tfrac{1}{\alpha} \theta_n' \big](t)\, \bsb{n}(t,\phi)
\end{equation}
which completes the proof of~\eqref{eq:taustar}.

\section{Differential operators using curvilinear coordinates and proof of Lemma~\ref{lemma:grad}}\label{apd:proofprop} Let points $\bsb{x}$ in a tubular neighborhood $V$ of $\Gamma$ be given in terms of curvilinear coordinates $(t,h)$, so that
\begin{equation}
\bsb{x} = \bsb{x}(t,\phi) + h \bsb{n}(t,\phi),
\end{equation}
with $\bsb{x}(t,\phi)$ and $\bsb{n}(t,\phi)$ as given in \eqref{wall:shape} and \eqref{tn:axi}, respectively, and let
\begin{equation}
\bsb{v}(\bsb{x}) = v_{\tau}(t,h)\bsb{\tau}(t,\phi) + v_n(t,h) \bsb{n}(t,\phi) \label{eq:vecv}
\end{equation}
denote a generic axisymmetric vector field in $V$. Then, at any point $\bsb{x}=\bsb{x}(t,\phi)$ on $\Gamma$ (i.e., at $h=0$), we have
\begin{equation}
\begin{aligned}
\nabla \bsb{v}
 &= (\tfrac{1}{\alpha}\pd_t v_ s -\kappa v_n) \bsb{\tau} \otimes \bsb{\tau}
  + (\tfrac{1}{\alpha}\pd_t v_n + \kappa v_{\tau}) \bsb{n} \otimes \bsb{\tau}
  + \tfrac{1}{\alpha R} (R'v_{\tau}+Z'v_n) \bsb{\nu} \otimes \bsb{\nu}
  + \pd_hv_{\tau}  \bsb{\tau} \otimes \bsb{n} + \pd_hv_n \bsb{n} \otimes \bsb{n}\\
  \nabla\cdot\bsb{v} &= \tfrac{1}{\alpha}\pd_t v_{\tau} - \kappa v_n + \tfrac{1}{\alpha R}(R' v_{\tau} + Z' v_n) + \pd_h v_n,
\end{aligned}
\end{equation}
where $\bsb{\nu} = \sin\phi \bsb{e}_x - \cos\phi \bsb{e}_y = \bsb{n}\times\bsb{\tau}$. In particular, the transformation velocity $\bsb{\theta}$ being of the form~\eqref{eq:vecv}, we have
\begin{equation}
  \divs \bsb{\theta} = \tfrac{1}{R\alpha} \big[ \pd_t (R\theta_{\tau}) + (Z'-\kappa R)\theta_n \big]. \label{divS:def}
\end{equation}
Assuming incompressibility, the condition $\nabla\cdot \bsb{v} = 0$ can be used for eliminating $\pd_h v_n$ and we obtain
\begin{equation}
  \nabla\bsb{v}
 = (\tfrac{1}{\alpha}\pd_t v_ s -\kappa v_n)(\bsb{\tau} \otimes \bsb{\tau} -\bsb{n} \otimes \bsb{n} )
 + (\tfrac{1}{\alpha}\pd_t v_n + \kappa v_{\tau}) \bsb{n} \otimes \bsb{\tau} + \pd_h v_{\tau}  \bsb{\tau} \otimes \bsb{n}
 + \frac{1}{\alpha R}(R' v_{\tau} + Z' v_n)(\bsb{\nu}\otimes\bsb{\nu}-\bsb{n} \otimes \bsb{n}).
\end{equation}

For the forward solution, we have $\bsb{u}=U \bsb{e}_z + u^{\sms{S}} \bsb{\tau}=\big(u^{\sms{S}}+\tfrac{1}{\alpha}UZ'\big)\bsb{\tau}-\tfrac{1}{\alpha} UR'\bsb{n}$ on $\Gamma$. Recalling that $2\bsb{D}[\bsb{u}] = \nabla\bsb{u}+\nabla\tv\bsb{u}$ and using $\kappa=\tfrac{1}{\alpha^3}(Z'R''-R'Z'')$ and $R'^2+Z'^2=\alpha^2$, we obtain
\begin{equation} 
\begin{aligned}
  \nabla\bsb{u}
 &= \frac{(u^{\sms{S}})'}{\alpha} \bsb{\tau} \otimes \bsb{\tau} + \frac{R'u^{\sms{S}}}{R\alpha}\bsb{\nu}\otimes\bsb{\nu}
 - \frac{(Ru^{\sms{S}})'}{R\alpha} \bsb{n} \otimes \bsb{n} + \kappa u^{\sms{S}}\bsb{n}\otimes\bsb{\tau}
 + \pd_h u_{\tau} \bsb{\tau} \otimes \bsb{n} \\
 &= \frac{(Ru^{\sms{S}})'}{R\alpha} \big( \bsb{\tau} \otimes \bsb{\tau} - \bsb{n} \otimes \bsb{n} \big)
  + \frac{R'u^{\sms{S}}}{R\alpha} \big( \bsb{\nu}\otimes\bsb{\nu} - \bsb{\tau} \otimes \bsb{\tau} \big)
  + \kappa u^{\sms{S}}\bsb{n}\otimes\bsb{\tau} + \pd_h u_{\tau} \bsb{\tau} \otimes \bsb{n}, \\
  2\bsb{D}[\bsb{u}]
 &= \frac{(2Ru^{\sms{S}})'}{R\alpha} \big( \bsb{\tau} \otimes \bsb{\tau} - \bsb{n} \otimes \bsb{n} \big)
  + \frac{2R'u^{\sms{S}}}{R\alpha} \big( \bsb{\nu}\otimes\bsb{\nu} - \bsb{\tau} \otimes \bsb{\tau} \big)
  + \big(\pd_h u_{\tau} + \kappa u^{\sms{S}}\big) \big(\bsb{n}\otimes\bsb{\tau} +\bsb{\tau} \otimes \bsb{n}\big),
\end{aligned}
\label{eq:graduand2Du0}
\end{equation}
on $\Gamma$. The stress tensor $\bsb{f} = -p\bsb{n}+2\mu\bsb{D}[\bsb{u}]\cdot\bsb{n}$ on $\Gamma$ is then found as
\begin{equation}
\bsb{f} = - \big( p + \frac{2\mu}{R\alpha}(Ru^{\sms{S}})' \big)\bsb{n} + \mu (\pd_h u_{\tau} + \kappa u^{\sms{S}}) \bsb{\tau}. \label{eq:flong}
\end{equation}
In particular, taking the tangential and normal projections of $\bsb{f}$, we obtain
\begin{equation}
  \frac{2\mu}{R\alpha}(Ru^{\sms{S}})' = -f_n-p, \qquad \pd_h u_{\tau} = \frac{1}{\mu}f_{\tau} - \kappa u^{\sms{S}}
\end{equation}
which, used in~\eqref{eq:graduand2Du0}, establishes the first part of the lemma. Using $(u^{\sms{S}},U)=(0,1)$ in this result then yields the second part.

\end{appendix}

\end{document}